
\documentclass[11pt,a4paper]{article}
\usepackage{bm}
\usepackage{graphicx,xcolor}
\usepackage{hyperref}
\usepackage{ascmac}
\usepackage{amsmath}
\usepackage{amscd}
\usepackage{amssymb}
\usepackage{amsfonts}
\usepackage{amsthm}
\usepackage[all]{xy}
\usepackage{mathtools}
\usepackage{enumerate}
\usepackage{tikz}
\usepackage{hyperref}
\usepackage{thm-restate}
\usepackage[shortlabels]{enumitem}
\usepackage{tikz-3dplot}
\usepackage{appendix}
\usepackage{url}

\setcounter{equation}{0}

\newtheorem{lem}{Lemma}[section]
\newtheorem{prop}{Proposition}[section]

\newtheorem{obs}{Observation}

\newtheorem{rem}{Remark}
\newtheorem{que}{Question}

\title{Simplex volumes in hyperplane arrangements}
\author{Koki Furukawa}

\begin{document}
\date{}
\maketitle

\begin{center}
{\footnotesize
Department of Applied Mathematics, \\
Faculty of Mathematics and Physics, Charles University, Czech Republic \\
\texttt{koki@kam.mff.cuni.cz}\\
}
\end{center}

\begin{abstract}
The distinct distances problem and the unit distance problem, posed by Erd\H{o}s \cite{erdos1946} in 1946, are classical problems in combinatorial geometry. 
They ask for the minimum number of distinct distances determined by 
$n$ points in $\mathbb{R}^d$ and the maximum number of pairs at unit-distance that such a set can determine, respectively.
These problems have been studied extensively for several decades, leading to numerous deep results.
We study the following dual setting:
\begin{itemize}
  \setlength{\parskip}{0cm} 
  \setlength{\itemsep}{-0.43cm} 
  \item Determine the maximum number of unit $d$-dimensional $d$-volume $d$-simplices formed by $n$ hyperplanes in $\mathbb{R}^d$.\\
  \item Determine the maximum number of minimum (or maximum) $d$-dimensional $d$-volume $d$-simplices formed by $n$ hyperplanes in $\mathbb{R}^d$.\\
  \item Determine the maximum number $D_d(n)$ such that any arrangement of $n$ hyperplanes in $\mathbb{R}^d$ in general position contains a subset of size $D_d (n)$ for which all induced $d$-dimensional simplices have distinct $d$-dimensional volumes. 

\end{itemize}

\end{abstract}

\section{Introduction}
For $d \geq 2$, a finite point set $X$ in $\mathbb{R}^d$ with $|X| \geq d+1$ is said to be in \textit{general position} if no $d+1$ points of $X$ lie on the same hyperplane, 
and we say that an arrangement of hyperplanes $\mathcal{A}$ in $\mathbb{R}^d$ with $|\mathcal{A}| \geq d+1$ is in \textit{general position} if every $d$ hyperplanes intersect at exactly one point and no $d+ 1$ hyperplanes have a common point. 

Throughout the paper, we use $O_d$ to denote the standard big-$O$ notation in which the constant may depend on $d$. We use analogous conventions for $\Omega_d$ and $\Theta_d$.
For $1\le k\le d$, we will refer to $k$-dimensional volume and $k$-dimensional simplex as \textit{$k$-volume} and \textit{$k$-simplex}.
Furthermore, by ``a $k$-simplex in a hyperplane arrangement in $\mathbb{R}^d$'', we mean a simplicial $k$-cell determined by $k+1$ hyperplanes of the arrangement.

\subsection{Erd\H{o}s's distinct distances problem}
In 1946, Erd\H{o}s~\cite{erdos1946} posed the question: what is the minimum number $g_d(n)$ of distinct distances determined by $n$ points in $\mathbb{R}^d$. This question is known as \emph{Erd\H{o}s's distinct distances problem}, and a variety of interesting results have been obtained. For the planar case $d=2$, Erd\H{o}s showed in the same paper an upper bound
$g_2(n) = O(n/\sqrt{\log n})$ by considering the $\sqrt{n} \times \sqrt{n}$ square lattice.
While there had been earlier progress on lower bounds (\cite{chung1984number, chung1992number, katz2004new, moser1952different, solymosi2001distinct, ekely1993crossing, tardos2003distinct}), a major breakthrough by Guth and Katz~\cite{guth2015erdHos}
proved $g_2(n)=\Omega(n/\log n)$.

In higher dimensions, a $d$-dimensional grid $n^{1/d} \times \cdots \times n^{1/d}$ gives $g_d(n) = O_d (n^{2/d})$, and this bound is conjectured to be optimal. The best known lower bound to date is due to Solymosi and Vu~\cite{solymosi2008near} which is $g_d(n)=\Omega_d (n^{2/d-2/(d(d+2))})$. 

A well-studied variant of this problem asks for the minimum number $g'_d(n)$ of distinct $d$-volumes of $d$-simplices spanned by $n$ points in $\mathbb{R}^d$, assuming that not all $n$ points lie on a common hyperplane.
 One easily sees that $g'_d(n)\le \lfloor (n-1)/d \rfloor$ by taking $d$ sets of about $n/d$ equally spaced points on $d$ parallel lines passing through the vertices of a fixed $(d-1)$-simplex.  Erd\H{o}s, Purdy, and Straus~\cite{erdos1982problem} conjectured that this construction is tight for all sufficiently large $n$.  Pinchasi~\cite{pinchasi2008minimum} proved this conjecture for the case $d = 2$, and Dumitrescu and Cs.T\'{o}th~\cite{dumitrescu2008number} later showed that $g'_d(n)=\Theta_d (n)$ for every $d$.

There is another related problem asking \textbf{(A)}: what is the maximum number $h_d(n)$ such that any set of $n$ points in $\mathbb{R}^d$ contains a subset $S$ of size $h_d(n)$ whose induced $d$-simplices all have distinct $d$-volumes.
The upper bound is again derived from the $n^{1/d} \times \cdots \times n^{1/d}$ grid, which gives $h_2(n) = O_d(n^{2/3})$ and $h_d(n) = O_d(n^{(d-1)/d})$ for $d \geq 3$.
Conlon et al.~\cite{conlon2015distinct} showed $h_d(n) = \Omega_d(n^{1/(2d+2)})$ for all $d \geq 2$.
 
\subsection{Erd\H{o}s's unit distance problem} 
The unit distance problem asks: what is the maximum number $u_d(n)$ of unit distances that a set of $n$ points in $\mathbb{R}^d$ can form.
For the case $d = 2$, Erd\H{o}s~\cite{erdos1946} obtained the bounds $n^{1 + c / \log \log n} < u_2(n) = O (n^{3/2})$ for some $c>0$ and conjectured that his lower bound is tight. 
Recently, this was disproved by OpenAI~\cite{openai2026planar}, which showed that there exists a constant $\varepsilon > 0$, such that $u_2 (n) \geq  n^{1+\varepsilon}$ for infinitely many $n$. Shortly after this result, Sawin~\cite{sawin2026explicit} made the explicit exponent, proving that $u_2 (n) \geq n^{1.014}$ for arbitrarily large $n$.
Nevertheless, the problem remains wide open. The best known upper bound $u_2(n) = O (n^{4/3})$ is due to Spencer, Szemerédi, and Trotter~\cite{spencer1984unit}.
Despite numerous efforts (\cite{clarkson1990combinatorial, pach2006forbidden, ekely1993crossing}), no further improvements have been found.
 In $3$-dimensions, the best known bounds are $u_3(n) = O(n^{3/2})$ \cite{kaplan2012unit, zahl2013improved} and $u_3(n) = \Omega(n^{4/3} \log \log n)$ \cite{erdos1960set}. For $d \ge 4$,  we have $u_d (n) = \Theta_d (n^2)$ (see \cite{erdHos1990variations, lenz1955zerlegung}).

This problem also has a well-studied direction of generalization.
In 1967, Oppenheim asked \textbf{(B)}: what is the maximum number of unit area triangles determined by $n$ points in the plane? See \cite{erdHos1996extremal}.
A simple lattice $\sqrt{\log n} \times (n/\sqrt{\log n})$ again gives a lower bound $\Omega (n^2 \log \log n)$ \cite{erdos1971some}.
Currently, the best known upper bound is $O(n^{2+2/9})$, due to Raz and Sharir \cite{raz2017number}. 
Dumitrescu and Cs.T\'{o}th \cite{dumitrescu2008number} extended this problem to three dimensions and obtained the following result: the number of unit volume tetrahedra ($3$-simplices) determined by $n$ points in $\mathbb{R}^3$ is $O(n^{7/2})$, and there is an $n$-point set achieving $\Omega (n^3 \log \log n)$.

Related problems \textbf{(C)}: are to determine the number of triangles of minimum/maximum area determined by a set of $n$ points studied by Braß, Rote, and Swanepoel \cite{brass2001triangles}. They proved that the maximum number of triangles of maximum area determined by $n$ points in the plane is exactly $n$, 
and the maximum number of triangles of minimum area determined by $n$ points in the plane is $\Theta(n^2)$.

Dumitrescu and Cs.T\'{o}th \cite{dumitrescu2008number} further generalized this problem and obtained the following results:
The number of tetrahedra of minimum volume spanned by $n$ points in $\mathbb{R}^3$ is
at most $2n^3/3 - O(n^2)$, and there is a configuration of $n$-points achieving $3n^3/16 - O(n^2)$. 
Furthermore, they proved that the maximum number of $d$-simplices of minimum volume spanned by $n$ points in $\mathbb{R}^d$ is $O_d(n^d)$ for all $d$.

\subsection{Dual setting}
In this paper, we investigate the following dual setting of \textbf{(A)}, \textbf{(B)} and \textbf{(C)}.
\begin{enumerate}
\item
The following is a dual setting of \textbf{(A)}:
\begin{que} \label{que1}
What is the maximum number $D_d(n)$ such that any arrangement of $n$ hyperplanes in $\mathbb{R}^d$ in general position contains a subset of size $D_d (n)$ whose induced $d$-simplices have all distinct $d$-volumes\text{?}
\end{que}
For the case $d=2$, Damásdi et al.~\cite{damasdi2020triangle} obtained a lower bound of $n^{1/5}$ under the stronger assumption that there are no six lines that are tangent to a common conic (note that five lines always have a 
common tangent conic).

\item The problem of generalizing \textbf{(B)} to higher dimensions is to determine the maximum number of unit $d$-volume $d$-simplices formed by $n$ points in $\mathbb{R}^d$. For this problem, we consider the following dual version.

\begin{que}
What is the maximum number $f_d (n)$ of unit $d$-volume $d$-simplices formed by $n$ hyperplanes in $\mathbb{R}^d$?
\end{que}
For the case $d = 2$, Dam\'{a}sdi et al.~\cite{damasdi2020triangle} proved that $f_2(n) = O(n^{9/4 + \varepsilon})$ for every fixed $\varepsilon > 0$, and $f_2(n) = \Omega(n^2)$. 
Their proof uses the incidence theorem between points and algebraic curves, proved by Sharir and Zahl~\cite{sharir2017cutting}.

\item We propose the following variants for the higher-dimensional generalization of \textbf{(C)} (that is, to determinie the maximum number of $d$-simplices of minimum/maximum $d$-volume determined by a set of $n$ points in $\mathbb{R}^d$). 
\begin{que} \label{que3}
What is the maximum number $m_d (n)$ of minimum $d$-volume $d$-simplices formed by $n$ hyperplanes in $\mathbb{R}^d$ \text{?}
\end{que}

For the case $d = 2$, Dam\'{a}sdi et al.~\cite{damasdi2020triangle} proved that 
$m_2 (n) = \Theta (n^2)$.

\begin{que} \label{que4}
What is the maximum number $M_d(n)$ of maximum $d$-volume $d$-simplices formed by $n$ hyperplanes in $\mathbb{R}^d$ \text{?}
\end{que}

As we mentioned above, with respect to the occurrence of the maximum area determined by $n$ points in the plane, it is known that the maximum number is exactly $n$. However, Damásdi et al.~\cite{damasdi2020triangle} showed that this phenomenon does not occur in the dual setting: $M_2 (n) > 7n/5 - O(1)$.
We note that they also showed an upper bound  $M_2 (n) \leq n(n-1)/3$.
\end{enumerate}

\noindent
\subsection{Our results}
We provide partial answers to Questions 1-4, especially for $d \geq 3$.
\begin{restatable}{thm}{UPfd} \label{UPfd}
For the maximum number $f_3 (n)$ of unit volume $3$-simplices (i.e. tetrahedra) formed by $n$ planes in $\mathbb{R}^3$, we have
$$
 f_3 (n) = O_\tau (n^{\frac{7}{2} + \tau})
$$
for every fixed $\tau > 0$.
\end{restatable}

\begin{restatable}{thm}{LBfd} \label{LBfd}
For the maximum number $f_d (n)$ of unit $d$-volume $d$-simplices formed by $n$ hyperplanes in $\mathbb{R}^d$, we have
$$
f_d (n) = \Omega_d (n^d).
$$
\end{restatable}

\begin{restatable}{thm}{md} \label{md}
For the maximum number $m_d (n)$ of minimum $d$-volume $d$-simplices formed by $n$ hyperplanes in $\mathbb{R}^d$, we have
$$m_d (n) = \Theta_d (n^d).$$
\end{restatable}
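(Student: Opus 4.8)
The plan is to prove the two bounds separately, both through the point–hyperplane duality that turns the arrangement into a point configuration. After a generic rotation of $\mathbb{R}^d$ no hyperplane is ``vertical'', so each hyperplane $h$ of the arrangement is the graph $\{x_d=\langle a,y\rangle+b\}$ of an affine function of $y=(x_1,\dots,x_{d-1})$; assign to it the dual point $\hat h=(a,b)\in\mathbb{R}^d$. General position of the $n$ hyperplanes is equivalent to the $n$ dual points, \emph{together with} their projections $a$ to the first $d-1$ coordinates, being in (suitably strong) general position. The first step is a volume identity: the $d$-simplex cut out by $h_0,\dots,h_d$ has volume
$$
c_d\cdot\frac{\operatorname{vol}_d\!\big(\operatorname{conv}\{\hat h_0,\dots,\hat h_d\}\big)^{\,d}}{\prod_{j=0}^{d}\operatorname{vol}_{d-1}\!\big(\operatorname{conv}\{a_i:i\neq j\}\big)}
$$
for a constant $c_d>0$. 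For $d=2$ this is the classical formula expressing a triangle's area through the dual points and the pairwise gaps of the slopes; in general it comes from a Cramer's-rule computation of the $d+1$ vertices, and its homogeneity (degree $d$ in the $b_k$, degree $-(d-1)$ in the $a_k$) is forced by scaling $b\mapsto\lambda b$ and $a\mapsto\mu a$.

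\paragraph{Upper bound $m_d(n)=O_d(n^d)$.}
I would double count over the $d$-element subsets of the arrangement. Fix $H_1,\dots,H_d$; after an affine change of coordinates assume $H_i=\{x_i=0\}$, so a further hyperplane $h=\{\sum a_ix_i=1\}$ completes them to a simplex of volume $\tfrac{1}{d!\,|a_1\cdots a_d|}$. Hence, writing $V$ for the minimum volume in the arrangement, every completing $h$ that produces a minimum simplex has normal on the hypersurface $|a_1\cdots a_d|=1/(d!\,V)$, which has $2^d$ branches, one per orthant. For two such hyperplanes $h^{(1)},h^{(2)}$ with normals in the same orthant one computes the volume of the sub-simplex obtained by deleting $H_j$ and adjoining $h^{(1)},h^{(2)}$: with $r_i:=a_i^{(1)}/a_i^{(2)}$ (so $\prod_i r_i=1$), the requirement that this volume be $\ge V$ for all $j$ becomes the system $|1-r_j|^d\ge r_j\prod_{i\ne j}|r_i-r_j|$ for every $j$, and combined with the analogous inequalities from deleting two or more of the $H_j$ it forces only $O_d(1)$ completions per orthant, hence $O_d(1)$ in total. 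Summing over all $\binom{n}{d}$ choices of $\{H_1,\dots,H_d\}$ and recalling that each minimum simplex is counted $d+1$ times yields $m_d(n)\le\tfrac{1}{d+1}\binom{n}{d}\cdot O_d(1)=O_d(n^d)$.

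\paragraph{Lower bound $m_d(n)=\Omega_d(n^d)$.}
I would construct $n$ hyperplanes, equivalently $n$ dual points $\hat h_k=(a_k,b_k)$, for which a $\Theta(1/n)$-fraction of the $\binom{n}{d+1}$ subsets simultaneously (i) minimize the dual-point-simplex volume $\operatorname{vol}_d(\operatorname{conv}\{\hat h_k\})$ and (ii) share a common value of the ``shadow product'' $\prod_j\operatorname{vol}_{d-1}(\operatorname{conv}\{a_i:i\neq j\})$; by the identity above all these subsets then realize the same hyperplane-volume, and one checks it is the global minimum. This generalizes the planar construction of Dam\'asdi et al.\ giving $m_2(n)=\Theta(n^2)$. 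A natural candidate is a highly structured point set --- a lattice/grid configuration slightly sheared to restore the general-position conditions on both the $\hat h_k$ and the $a_k$, or an iterated ``product'' analogue of the planar configuration --- for which both quantities are controlled combinatorially and minimum dual simplices are plentiful, in the spirit of the point constructions of Dumitrescu and Cs.\,T\'oth.

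\paragraph{Main obstacle.}
The main difficulty is this last construction. Because general position forbids parallel hyperplanes and forbids more than $d-1$ hyperplanes sharing a lower-dimensional flat, one cannot take parallel families or pencils, so the challenge is to obtain \emph{exact} equality of volumes for $\Omega(n^d)$-many subsets (not merely approximate equality) and then to verify that no other simplex is smaller; the volume identity reduces this to a clean question about minimum simplices and shadow volumes of a point set, but exhibiting a configuration meeting conditions (i)–(ii) \emph{and} general position, and carrying out the minimality check, is where the real work lies. On the upper-bound side the only delicate point is the degenerate-tie analysis in the ``bounded completions'' lemma --- normals with $r=(t,\dots,t,t^{-(d-1)})$ make several of the deleted-$H_j$ sub-simplices tie with $V$ --- which is precisely why one must also use the sub-simplices obtained by deleting two or more of $H_1,\dots,H_d$.
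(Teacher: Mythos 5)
Your proposal has the right skeleton for the upper bound but leaves both halves of the theorem with genuine gaps. For the upper bound, your reduction is essentially the paper's: fix $d$ hyperplanes, normalize them to the coordinate hyperplanes, and observe that every completion to a minimum-volume simplex has its normal on the degree-$d$ hypersurface $|a_1\cdots a_d|=1/(d!\,V)$ with $2^d$ branches. But the crux --- that each branch admits only $O_d(1)$ completions --- is asserted, not proved: you write down the pairwise inequality system in the ratios $r_i$ and claim it ``forces only $O_d(1)$ completions per orthant,'' with no argument that the system has boundedly many simultaneous solutions. The paper closes exactly this step with a purely geometric containment argument showing there is at most \emph{one} completion per branch: if $T$ and $T'$ were two tangent hyperplanes on the same branch, then among the $d+2$ hyperplanes $H_1,\dots,H_d,T,T'$ one of the two simplicial cells guaranteed by Shannon's theorem would be properly contained in $\tau(H_1,\dots,H_d,T')$, so $\tau'$ would strictly contain a simplex of the arrangement, contradicting minimality. (In $d=3$ the paper also gives a direct coordinate argument.) Without some such argument your upper bound is incomplete.

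The larger gap is the lower bound, which you explicitly defer as the ``main obstacle'' and never construct; that is half the theorem. Two things block you. First, you impose general position (no parallel hyperplanes, via the duality you set up), but Question~\ref{que3} does not require it, and the extremal example exploits this freedom. Second, the construction is much simpler than a sheared lattice of dual points: take the $d(n+1)$ hyperplanes $x_i=k$ ($k\in\{0,\dots,n\}$) together with the $\binom{d}{2}(2n-1)$ hyperplanes $x_p-x_q=t$ ($|t|\le n-1$). This is $\Theta_d(n)$ hyperplanes, it cuts $[0,n]^d$ into $n^d$ unit cubes, and the diagonal hyperplanes give each cube its Coxeter--Freudenthal--Kuhn triangulation into $d!$ simplices of volume $1/d!$, i.e.\ $d!\,n^d$ congruent simplices. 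Since every normal lies in $\{e_i\}\cup\{e_p-e_q\}$ and all offsets are integers, every vertex of every induced simplex is a lattice point, so no simplex of the arrangement has volume below $1/d!$ and these are all minimum-volume. Your dual volume identity, while plausibly correct (it is the classical formula for $d=2$), ends up doing no work: the upper bound is cleaner in primal coordinates, and the lower bound does not need it.
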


\begin{restatable}{thm}{Md} \label{Md}
For the maximum number $M_d (n)$ of maximum $d$-volume $d$-simplices formed by $n$ hyperplanes in $\mathbb{R}^d$, we have
$$
M_d (n) > \frac{7}{5} (n-d) - O(1).
$$
\end{restatable}

\begin{restatable}{thm}{Dd} \label{Dd}
For the largest subset of hyperplanes forming $d$-simplices of distinct $d$-volumes, we have
$$
D_2 (n) \ll n(\log n)^{-c}
$$ 
for some absolute constant $c > 0$,
For every $d \geq 3$, there is $c_d \in (0,1)$ such that
$$
D_d (n) \ll n e^{-(\log \log n)^{c_d}}.
$$
\end{restatable}

Theorem \ref{Dd} gives partial answers for Problem 5.4 and Problem 5.5 in \cite{damasdi2020triangle}.

The rest of the paper is organized as follows. We will prove Theorem \ref{UPfd}, \ref{LBfd} in the next section.
In Section \ref{section3} we will prove Theorem \ref{md}. Section \ref{section4} is devoted to proving Theorem \ref{Md}. 
Finally, in Section \ref{section5}, we will give a proof of Theorem \ref{Dd}.

\section{Unit volume $d$-simplices} \label{section2}

In this section, we consider the number of $d$-simplices with unit volume determined by an arrangement of $n$ hyperplanes, namely to investigate the magnitude of $f_d(n)$.
We have a trivial upper bound $O_d(n^{d+1})$ by selecting generic $d+1$ hyperplanes from the arrangement.
The following provides a reasonably better upper bound for $d=3$.
\UPfd*

\begin{proof}
Let $\mathcal{H}$ be a family of $n$ planes in $\mathbb{R}^3$. Fix two distinct non-parallel planes $A,B \in \mathcal{H}$. If no such $A$ and $B$ exist, then there is nothing to prove. We first estimate the number of unit volume tetrahedra using $A$ and $B$. Since an affine transformation preserves ratios of volumes, it suffices to estimate the number of tetrahedra of volume $1$ containing $A$ and $B$ under the assumption that
$$
A:x=0, B:y=0.
$$
Let $\mathcal{H}_{A,B} \subset \mathcal{H} \setminus \{A,B\}$ be the set of planes $H \in \mathcal{H} \setminus \{ A,B \}$ which are not parallel to the direction of the line $A\cap B$. For each $H \in \mathcal{H}_{A,B}$, write
$$
H : z = a_H x + b_H y + c_H.
$$
Furthermore, define the dual point of $H$ by $p_H := (a_H, b_H, c_H) \in \mathbb{R}^3$, and set
$$
\mathcal{P}_{A,B} := \{ p_H : H \in \mathcal{H}_{A,B} \}.
$$
Clearly, $|\mathcal{P}_{A,B}| \leq n-2$. For two distinct planes $C,D \in \mathcal{H}_{A,B}$, in order for $A,B,C,D$ to determine a non-degenerate tetrahedron (that is, a tetrahedron of nonzero volume), it is necessary that
$$
(a_D-a_C)(b_D-b_C)(c_D-c_C)\neq0.
$$
In this case, it is easy to see that the volume of the tetrahedron determined by $A,B,C,D$ is
$$
\frac{|c_D - c_C|^3}{6 | (a_D - a_C)(b_D - b_C)|}.
$$
Therefore, the condition that the tetrahedron determined by $A,B,C,D$ has volume $1$ is equivalent to the existence of $\varepsilon \in \{+1, -1 \}$ such that
$$
(c_D - c_C)^3 = 6 \varepsilon (a_D - a_C)(b_D - b_C)
$$
and
$$
(a_D-a_C)(b_D-b_C)(c_D-c_C)\neq0.
$$
For each $\varepsilon \in \{+1, -1 \}$ and each plane $H \in \mathcal{H}_{A,B}$, consider the surface in $\mathbb{R}^3$ defined by
$$
S_H^\varepsilon : (z - c_H)^3 = 6 \varepsilon (x - a_H)(y - b_H),
$$
and define the family of surfaces
$$
\mathcal{S}_{A,B}^\varepsilon := \{ S_H^\varepsilon : H \in \mathcal{H}_{A,B} \}.
$$
We now define a bipartite graph $G_{A,B}^{\varepsilon}$ on $\mathcal P_{A,B} \sqcup \mathcal S_{A,B}^{\varepsilon}$ by adding an edge $(p_K, S_H^\varepsilon) \in \mathcal P_{A,B} \times \mathcal S_{A,B}^{\varepsilon}$ if and only if
$
p_K \in S_H^\varepsilon
$
and
$$
(a_H-a_K)(b_H-b_K)(c_H-c_K)\neq0
$$
hold. (Here $H,K \in \mathcal{H}_{A,B}$.) Then each tetrahedron of volume $1$ containing $A$ and $B$ gives exactly one edge in each of $G_{A,B}^{+1}$ and $G_{A,B}^{-1}$. Conversely, for each $\varepsilon \in \{ +1, -1 \}$, every edge of $E(G_{A,B}^{\varepsilon})$ uniquely determines a tetrahedron of volume $1$ containing $A$ and $B$. Hence the number of tetrahedra of volume $1$ containing $A$ and $B$ is equal to $|E(G_{A,B}^{+1})| = |E(G_{A,B}^{-1})|$. We will use the following lemma.

\begin{lem} \label{lem:incidence}
Fix $\varepsilon \in \{ +1, -1 \}$. For any three distinct planes $C_1, C_2, C_3 \in \mathcal{H}_{A,B}$, there are at most $4$ points $p = (a,b,c) \in \mathcal{P}_{A,B}$ such that
$
p \in S_{C_i}^{\varepsilon}
$
and
$
(a-a_{C_i})(b-b_{C_i})(c-c_{C_i})\neq0
$
for every $i=1,2,3$.
\end{lem}

This lemma immediately implies that the bipartite graph $G_{A,B}^\varepsilon$ is $K_{5,3}$-free. 

Let $P, Q \subset \mathbb{R}^d$ be finite sets, and let $G = (P \sqcup Q,E)$ be a bipartite graph. We say that $G$ is a \textit{semi-algebraic bipartite graph of description complexity at most $t$} if there exist polynomials $f_1, \ldots, f_t \in \mathbb{R} [x_1, \ldots, x_{2d}]$ of degree at most $t$ and a Boolean function $\Phi$ in $t$ variables such that, for every $(p,q) \in P \times Q$, the condition $(p,q) \in E$ is equivalent to $\Phi (f_1 (p,q) \geq 0, \ldots, f_t (p,q) \geq 0) = 1$. The following proposition is a special case of a result of Fox et al.~\cite{fox2017semi}.

\begin{prop}[Fox et al.~\cite{fox2017semi}]
Let $P, Q \subset \mathbb{R}^d$ be finite sets, and let $G = (P \sqcup Q,E)$ be a semi-algebraic bipartite graph of description complexity at most $t$. If $G$ is $K_{k,k}$-free, then
$$
|E| = O_{d,t,k,\tau} \big((|P| |Q|)^{\frac{d}{d+1} + \tau} + |P| +  |Q| \big)
$$
for every sufficiently small constant $\tau>0$.
\end{prop}

Note that $\mathcal{S}_{A,B}^\varepsilon$ can be identified with the parameter set $\mathcal{Q}_{A,B} := \{ (a_H, b_H, c_H) : H \in \mathcal{H}_{A,B} \}$, which is a copy of $\mathcal{P}_{A,B}$. Hence, by definition, the description complexity of $G_{A,B}^{\varepsilon}$ is at most $4$. Moreover, $G_{A,B}^{\varepsilon}$ is in particular $K_{5,5}$-free. Applying the above proposition with $d=3$ and $k=5$, we obtain, 
$$
|E(G_{A,B}^\varepsilon)|
= O_\tau \big((|\mathcal{P}_{A,B}| |\mathcal{S}_{A,B}^\varepsilon|)^{\frac{3}{4} + \tau} + |\mathcal{P}_{A,B}| +  |\mathcal{S}_{A,B}^\varepsilon| \big)
= O_\tau (n^{\frac{3}{2} + 2 \tau})
$$
for every $\tau>0$.
Therefore, summing over all choices of $A,B \in \mathcal{H}$ and taking into account the overcounting of tetrahedra, we get
$$
f_3 (n) \leq \dfrac{\binom{n}{2}}{\binom{4}{2}} O_\tau (n^{\frac{3}{2} + 2 \tau}) = O_\tau (n^{\frac{7}{2} + 2 \tau}).
$$
\end{proof}

\begin{proof}[Proof of Lemma~\ref{lem:incidence}]
For the sake of simplicity, 
for each $i=1,2,3$, denote the dual point of the plane $C_i$ by $q_i=(a_i,b_i,c_i) \in \mathbb{R}^3$.
 By applying a translation, we may assume that $q_1=(0,0,0)$. Thus, with variables $a,b,c$, the three surfaces that we need to consider are
$$
S_1: c^3=6\varepsilon ab
$$
and
$$
S_i: (c-c_i)^3=6\varepsilon(a-a_i)(b-b_i),\quad i=2,3.
$$
It suffices to count the points $p = (a,b,c)$ satisfying $p \in S_i$ for all $i=1,2,3$, and
$$
abc \neq 0
$$
and
$$(a-a_i)(b-b_i)(c-c_i)\neq0, \quad i=2,3.$$
First, for each $i = 2,3$, every common point $(a,b,c)$ of $S_1$ and $S_i$ satisfies
$$
a b_i + a_i b = a_i b_i - \frac{(c- c_i)^3 - c^3}{6 \varepsilon}.
$$
Denote the right-hand side by $R_i (c)$. Then $R_i (c)$ is a polynomial in $c$ of degree at most $2$. Therefore, every common point $(a,b,c)$ of $S_1, S_2, S_3$ satisfies
$$
\begin{pmatrix} b_2 & a_2 \\ b_3 & a_3 \end{pmatrix} \begin{pmatrix} a \\ b \end{pmatrix} = \begin{pmatrix} R_2 (c) \\ R_3 (c) \end{pmatrix}.
$$

First suppose that $(a_2,b_2)$ and $(a_3,b_3)$ are linearly independent. Then there exist polynomials $U(c),V(c)$ of degree at most $2$ such that
$$
a=U(c),\qquad b=V(c).
$$
Substituting this into the equation of $S_1$, we obtain
$$
c^3-6\varepsilon U(c)V(c)=0.
$$
If this equation is not an identity in $c$, then
$$
c^3-6\varepsilon U(c)V(c)
$$
is a nonzero polynomial in $c$ of degree at most $4$, and hence there are at most $4$ possible values of $c$. For each $c$, the values of $a$ and $b$ are uniquely determined. Thus, there are at most $4$ points $(a,b,c)$ satisfying the required conditions. Next suppose that
$$
c^3-6\varepsilon U(c)V(c)=0
$$
is an identity. Then $U(c) V(c)$ is a cubic monomial in $c$, and hence there exist nonzero constants $\alpha,\beta$ such that either $U(c)=\alpha c,\ V(c)=\beta c^2$, or $U(c)=\alpha c^2,\ V(c)=\beta c$. In the former case, for each $i=2,3$,
$$
R_i (c) = b_i \alpha c + a_i \beta c^2 = a_i b_i + \frac{c_i}{2 \varepsilon} c^2 - \frac{c_i^2}{2 \varepsilon} c + \frac{c_i^3}{6 \varepsilon}.
$$
Moreover, since $c^3 - 6 \varepsilon \alpha \beta c^3 = 0$, comparing coefficients gives $(a_i, b_i, c_i) = (0,0,0)$, contradicting the fact that $C_i$ is distinct from $C_1$. The latter case similarly gives $(a_i, b_i, c_i) = (0,0,0)$.

Next suppose that $(a_2,b_2)=(a_3,b_3)=(0,0)$. Then, at every common point $(a,b,c)$ of $S_1$ and $S_i$, we have
$$
c^3=(c-c_i)^3.
$$
Thus $c_i=0$. Hence $q_i=(0,0,0)=q_1$, contradicting the fact that $C_i$ is distinct from $C_1$. Therefore, in this case there is no point satisfying the required conditions.

Finally suppose that $(a_2,b_2)$ and $(a_3,b_3)$ are linearly dependent, but $(a_2,b_2)=(a_3,b_3)=(0,0)$ does not hold. By relabeling if necessary, we may assume that $(a_2,b_2)\neq(0,0)$. Then there exists a constant $\gamma$ such that
$$(a_3,b_3)=\gamma(a_2,b_2).$$
In this case, the two linear equations have a simultaneous solution if and only if
$$
R_3(c)=\gamma R_2(c).
$$
If this equation is not an identity in $c$, then $R_3(c)-\gamma R_2(c)$ is a nonzero polynomial of degree at most $2$. Hence there are at most $2$ possible values of $c$. Moreover, for the points that we count, $abc\neq0$, so $c\neq0$. For such a fixed $c$, the pair $(a,b)$ is an intersection point of the line
$$
b_2a+a_2b=R_2(c)
$$
and the hyperbola
$$
ab=\frac{c^3}{6\varepsilon}.
$$
Since $c\neq0$, there are at most $2$ such intersection points. Therefore, in this case, there are at most $4$ points $(a,b,c)$ satisfying the required conditions.

It remains to consider the case where
$$
R_3(c)=\gamma R_2(c)
$$
is an identity. First, if $\gamma=0$, then $R_3(c)$ is identically $0$. In this case $(a_3,b_3)=(0,0)$, and $R_3(c)\equiv0$ implies $c_3=0$. Thus $q_3=(0,0,0)=q_1$, contradicting the fact that $C_3$ is distinct from $C_1$. Hence we may assume that $\gamma\neq0$. Comparing coefficients gives
$$
c_3=\gamma c_2,\qquad c_3^2=\gamma c_2^2.
$$
If $\gamma=1$, then $q_3=q_2$, contradicting the fact that $C_2$ and $C_3$ are distinct. Hence we may assume that $\gamma\neq1$. Then
$$
\gamma^2c_2^2=\gamma c_2^2,
$$
and therefore $c_2=0$. Thus also $c_3=0$. Comparing the constant terms in the identity $R_3(c)=\gamma R_2(c)$ gives
$$
a_3b_3=\gamma a_2b_2.
$$
On the other hand, since $(a_3, b_3) = \gamma (a_2, b_2)$, we have
$$
a_3 b_3=\gamma^2a_2 b_2.
$$
Hence $(\gamma^2 - \gamma) a_2 b_2 = 0$. Since $\gamma \neq 0, 1$, it follows that $a_2 b_2=0$.

If $a_2=0$, then $b_2\neq0$ as $(a_2,b_2)\neq(0,0)$. Since $c_2=0$, we have $R_2(c)=0$, and the equation
$$
a b_2+a_2 b=R_2(c)
$$
implies $a=0$. This contradicts $abc\neq0$. Similarly, if $b_2=0$, then $b=0$, which again contradicts $abc\neq0$.
\end{proof}

It is clear that a lower bound for $m_d(n)$ gives a lower bound for $f_d(n)$. See the lower bound construction in the next section.

\section{Minimum volume $d$-simplices} \label{section3}
In this section, we prove Theorem \ref{md}. Let us recall the statement.
\md*

Kobon Fujimura asked in his book (called ''The Tokyo puzzle'' \cite{Fujimura1978}) what is the largest number $K (n)$ of non-overlapping triangles whose sides lie on an arrangement of $n$ lines (Gr\"{u}nbaum \cite{grunbaum1972arrangements} had studied a similar problem slightly earlier than Fujimura, but only in the case of arrangements in the projective plane).
A first upper bound was established by Saburo Tamura, which is $K (n) \leq \lfloor \frac{n(n-2)}{3} \rfloor$.
In 2007, Cl\'{e}ment and Bader \cite{clement2007tighter} found a slightly improved upper bound:
$$
K (n) \leq 
\begin{cases*}
 \lfloor \frac{n(n-2)}{3} \rfloor - 1 & if $n \equiv 0,2 \pmod 6$, \\
 \lfloor \frac{n(n-2)}{3} \rfloor & otherwise.
\end{cases*}
$$
For even $n$, Bartholdi et al.~\cite{bartholdi2008simple} provided a slightly sharper upper bound of $\lfloor \frac{n}{3} (n -\frac{7}{3}) \rfloor$.
On the other hand, F\"{u}redi and Pal\'{a}sti \cite{furedi1984arrangements} having constructed an arrangement of $n$ lines which contains $\lfloor \frac{n(n-3)}{3} \rfloor$ non-overlapping triangles, which gives $K (n) \geq \lfloor \frac{n(n-3)}{3} \rfloor$.

We now consider a higher-dimensional analogue.
Let $K_d (n)$ be the largest number of simplicial $d$-cells in an arrangement of $n$ hyperplanes in $\mathbb{R}^d$. Note that $K (n) = K_2 (n)$. 

For $d=2$, it immediately follows that $m_2(n) \leq K_2(n)$ from the fact that cutting a triangle with a single line always forms a smaller triangle in its interior.
However, unfortunately, the same argument does not hold for $d \geq 3$. Indeed, a $d$-simplex cut by a hyperplane creates a new $d$-simplex with $d$-volume smaller than the original in the interior if and only if the hyperplane separates one vertex from all the others. See Figure \ref{cutting a simplex} for the case $d=3$.

\begin{figure}[htbp]
\tikzset{every picture/.style={line width=0.75pt}} 

\begin{tikzpicture}[x=0.75pt,y=0.75pt,yscale=-0.68,xscale=0.68]

\draw    (374,272.94) -- (590.79,325) ;
\draw    (493.43,85) -- (590.79,325) ;
\draw    (493.43,85) -- (374,272.94) ;
\draw    (493.43,85) -- (657,205.63) ;
\draw    (657,205.63) -- (590.79,325) ;
\draw  [dash pattern={on 0.84pt off 2.51pt}]  (374,272.94) -- (657,205.63) ;
\draw [color={rgb, 255:red, 0; green, 0; blue, 0 }  ,draw opacity=1 ][fill={rgb, 255:red, 155; green, 155; blue, 155 }  ,fill opacity=0.5 ]   (447.42,156.91) -- (538.94,118.79) -- (634.43,242.69) -- (502.48,305.07) -- cycle ;
\draw  [fill={rgb, 255:red, 0; green, 0; blue, 0 }  ,fill opacity=1 ] (445.69,156.91) .. controls (445.69,155.96) and (446.46,155.18) .. (447.42,155.18) .. controls (448.38,155.18) and (449.16,155.96) .. (449.16,156.91) .. controls (449.16,157.87) and (448.38,158.65) .. (447.42,158.65) .. controls (446.46,158.65) and (445.69,157.87) .. (445.69,156.91) -- cycle ;
\draw  [fill={rgb, 255:red, 0; green, 0; blue, 0 }  ,fill opacity=1 ] (500.74,305.07) .. controls (500.74,304.12) and (501.52,303.34) .. (502.48,303.34) .. controls (503.44,303.34) and (504.21,304.12) .. (504.21,305.07) .. controls (504.21,306.03) and (503.44,306.81) .. (502.48,306.81) .. controls (501.52,306.81) and (500.74,306.03) .. (500.74,305.07) -- cycle ;
\draw  [fill={rgb, 255:red, 0; green, 0; blue, 0 }  ,fill opacity=1 ] (537.2,118.79) .. controls (537.2,117.83) and (537.98,117.06) .. (538.94,117.06) .. controls (539.9,117.06) and (540.67,117.83) .. (540.67,118.79) .. controls (540.67,119.75) and (539.9,120.52) .. (538.94,120.52) .. controls (537.98,120.52) and (537.2,119.75) .. (537.2,118.79) -- cycle ;
\draw  [fill={rgb, 255:red, 0; green, 0; blue, 0 }  ,fill opacity=1 ] (632.69,242.69) .. controls (632.69,241.73) and (633.47,240.96) .. (634.43,240.96) .. controls (635.39,240.96) and (636.17,241.73) .. (636.17,242.69) .. controls (636.17,243.65) and (635.39,244.42) .. (634.43,244.42) .. controls (633.47,244.42) and (632.69,243.65) .. (632.69,242.69) -- cycle ;
\draw    (4,275.94) -- (220.79,328) ;
\draw    (123.43,88) -- (220.79,328) ;
\draw    (123.43,88) -- (4,275.94) ;
\draw    (123.43,88) -- (287,208.63) ;
\draw    (287,208.63) -- (220.79,328) ;
\draw  [dash pattern={on 0.84pt off 2.51pt}]  (4,275.94) -- (287,208.63) ;
\draw [color={rgb, 255:red, 0; green, 0; blue, 0 }  ,draw opacity=1 ][fill={rgb, 255:red, 155; green, 155; blue, 155 }  ,fill opacity=0.5 ]   (140,129.27) -- (264.43,245.69) -- (132.48,308.07) ;
\draw  [fill={rgb, 255:red, 0; green, 0; blue, 0 }  ,fill opacity=1 ] (130.74,308.07) .. controls (130.74,307.12) and (131.52,306.34) .. (132.48,306.34) .. controls (133.44,306.34) and (134.21,307.12) .. (134.21,308.07) .. controls (134.21,309.03) and (133.44,309.81) .. (132.48,309.81) .. controls (131.52,309.81) and (130.74,309.03) .. (130.74,308.07) -- cycle ;
\draw  [fill={rgb, 255:red, 0; green, 0; blue, 0 }  ,fill opacity=1 ] (138.26,129.27) .. controls (138.26,128.31) and (139.04,127.53) .. (140,127.53) .. controls (140.96,127.53) and (141.74,128.31) .. (141.74,129.27) .. controls (141.74,130.22) and (140.96,131) .. (140,131) .. controls (139.04,131) and (138.26,130.22) .. (138.26,129.27) -- cycle ;
\draw  [fill={rgb, 255:red, 0; green, 0; blue, 0 }  ,fill opacity=1 ] (262.69,245.69) .. controls (262.69,244.73) and (263.47,243.96) .. (264.43,243.96) .. controls (265.39,243.96) and (266.17,244.73) .. (266.17,245.69) .. controls (266.17,246.65) and (265.39,247.42) .. (264.43,247.42) .. controls (263.47,247.42) and (262.69,246.65) .. (262.69,245.69) -- cycle ;
\draw    (140,131) -- (132.48,309.81) ;
\draw  [draw opacity=0][fill={rgb, 255:red, 74; green, 144; blue, 226 }  ,fill opacity=0.3 ] (140,129.27) -- (266.17,245.69) -- (220.79,328) -- (132.48,306.34) -- cycle ;

\end{tikzpicture}
\caption{Left: a cut creating a $3$-simplex in the interior of a $3$-simplex. Right: a cut not creating a $3$-simplex in the interior.}
\label{cutting a simplex}
\end{figure}
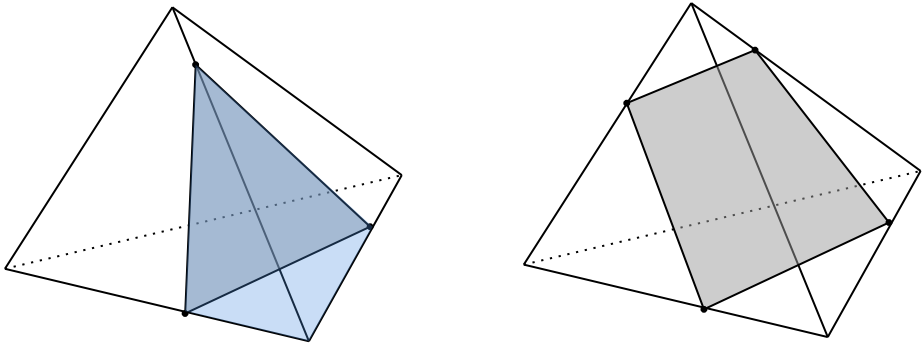

Nevertheless, finding bounds for $K_d(n)$ might be an interesting problem for its own sake.
\begin{prop} \label{Kobon}
For any $d \geq 3$, and $n \geq d+1$,
$$
K_d (n) \leq \frac{2^{d-1}}{(d+1)!} n^d + O_d (n^{d-1}).
$$
\end{prop}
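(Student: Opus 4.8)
The plan is to adapt the standard double-counting argument that yields the Tamura bound $K(n) \le \lfloor n(n-2)/3 \rfloor$ in the plane, but now counting facet–simplex incidences rather than edge–triangle incidences. Let $\mathcal{A}$ be an arrangement of $n$ hyperplanes in general position, and let $\mathcal{S}$ be a collection of nonoverlapping (interior-disjoint) full-dimensional simplices each of whose $d+1$ facets lies on a hyperplane of $\mathcal{A}$. Every simplex in $\mathcal{S}$ has exactly $d+1$ facets, so the number of (simplex, facet) incidences is exactly $(d+1)|\mathcal{S}|$. The idea is to bound this quantity from above by estimating, for each hyperplane $H \in \mathcal{A}$, how many simplices of $\mathcal{S}$ can have a facet on $H$.

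First I would fix a hyperplane $H$ and look at the induced arrangement $\mathcal{A}_H$ of the $n-1$ traces $H \cap H'$ inside $H \cong \mathbb{R}^{d-1}$; this is an arrangement of $n-1$ hyperplanes in $\mathbb{R}^{d-1}$, in general position. A facet of a simplex $\Delta \in \mathcal{S}$ lying on $H$ is a $(d-1)$-dimensional simplex whose own facets lie on hyperplanes of $\mathcal{A}_H$, and since the simplices in $\mathcal{S}$ are interior-disjoint their facets on $H$ have disjoint relative interiors inside $H$. Hence the number of simplices of $\mathcal{S}$ with a facet on $H$ is at most $K_{d-1}(n-1)$, and summing over the $n$ hyperplanes gives
\begin{equation*}
(d+1)\,|\mathcal{S}| \;\le\; n\, K_{d-1}(n-1).
\end{equation*}
This already suggests an inductive scheme: assuming $K_{d-1}(m) \le \frac{2}{d!}m^{d-1} + O_{d}(m^{d-2})$, we would obtain $K_d(n) \le \frac{n}{d+1}\bigl(\frac{2}{d!}(n-1)^{d-1} + O_d(n^{d-2})\bigr) = \frac{2}{(d+1)!}n^d + O_d(n^{d-1})$, which is exactly the claimed bound. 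The base case $d=2$ is Tamura's inequality $K_2(n) \le \lfloor n(n-2)/3\rfloor \le \frac{2}{3!}\cdot n^2 \cdot\frac{3}{2}\cdot\frac{1}{?}$—here one must be slightly careful: Tamura's bound is $\tfrac{1}{3}n^2 + O(n)$, and $\tfrac{2}{3!} = \tfrac{1}{3}$, so it matches the formula with $d=2$, giving a clean start to the induction.

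The step I expect to be the main obstacle is the claim that the facets-on-$H$ of the simplices in $\mathcal{S}$ themselves form a \emph{valid} family counted by $K_{d-1}(n-1)$: being interior-disjoint as $(d-1)$-simplices inside $H$ is exactly the nonoverlapping condition, and each such facet's own $(d-1)$-facets lie on lines/hyperplanes of $\mathcal{A}_H$ — so this is in fact fine, but one must phrase it carefully, noting that distinct simplices of $\mathcal{S}$ can share a facet on $H$ only if they lie on opposite sides, which at most doubles the count and is already absorbed (this is the source of the constant $2$ versus the naïve $1$, mirroring why the planar bound has $n(n-2)/3$ rather than $n(n-2)/6$). A cleaner route, which I would actually adopt, is: each simplex of $\mathcal{S}$ has $d+1$ facets; assign to $\Delta$ its facet on a fixed "lowest" hyperplane in some generic linear order, but instead simply observe directly that on each $H$ the facets-from-above and facets-from-below each form a nonoverlapping $(d-1)$-family, giving the factor and the bound $(d+1)|\mathcal{S}| \le 2n\,\tilde K_{d-1}(n-1)$ where $\tilde K$ counts one-sided families; reconciling the precise constant so that the leading term comes out to $\tfrac{2}{(d+1)!}$ rather than twice that is the delicate bookkeeping, and I would handle it by carrying the asymptotic form $\tfrac{2}{d!}m^{d-1}+O_d(m^{d-2})$ through the induction and checking that the recursion $c_d = \tfrac{2}{d+1}\cdot\tfrac{c_{d-1}}{?}$ telescopes to $\tfrac{2}{(d+1)!}$ with $c_2 = \tfrac13$. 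All error terms are polynomial of one lower degree and are absorbed into the $O_d(n^{d-1})$ term at each stage.
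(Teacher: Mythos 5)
Your skeleton is exactly the paper's: double-count (simplex, facet) incidences, restrict to each hyperplane $H$ to get an arrangement of $n-1$ traces in $H\cong\mathbb{R}^{d-1}$, deduce the recursion $(d+1)K_d(n)\le n\,K_{d-1}(n-1)$, and telescope down to Tamura's base case $K_2(m)\le \lfloor m(m-2)/3\rfloor=\tfrac{2}{3!}m^2+O(m)$, which indeed yields $\tfrac{3!}{(d+1)!}\cdot\tfrac13=\tfrac{2}{(d+1)!}$. That part is right and matches the paper.

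The genuine gap is the one you flag and then leave open: two interior-disjoint $d$-simplices of $\mathcal{S}$ lying on opposite sides of $H$ could a priori share the same $(d-1)$-facet, so the facets on $H$ form a multiset and your honest bound is $(d+1)|\mathcal{S}|\le 2n\,K_{d-1}(n-1)$. This is not ``absorbed'': a factor $2$ at every level of the recursion inflates the leading constant to $2^{d-2}\cdot\tfrac{2}{(d+1)!}$, not $\tfrac{2}{(d+1)!}$, and your closing sentence about ``delicate bookkeeping'' does not close this. The missing idea (which is what the paper supplies) is that a shared facet forces a degeneracy: if the upper simplex uses $H_i$ and the lower uses $H_i'$ for the same facet-trace of the common face $F$, then either $H_i=H_i'$ for all $i$ --- impossible, since $d+1$ hyperplanes bound a unique simplex --- or some $H_i\neq H_i'$ have the same trace on $H$, whence $H, H_i, H_i'$ contain a common $(d-2)$-flat and some vertex of $F$ lies on at least $d+1$ hyperplanes. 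The paper then splits into two cases: if no $d+1$ hyperplanes concur, no sharing occurs and the clean inequality $(d+1)K_d(n)\le nK_{d-1}(n-1)$ holds; if they do concur, each such concurrence destroys more $(d-1)$-simplicial faces in the trace arrangements than the sharing saves, so the same inequality still holds. You need some version of this argument (or another way to kill the factor $2$) for the stated constant to come out.
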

The proof is presented in the Appendix.
We provide an upper bound for $m_d(n)$ by a different approach. 

\begin{prop} \label{tangent plane}
Let $H_1,\dots,H_d$ be hyperplanes in $\mathbb{R}^d$ meeting at a unique point and fix $V_0 > 0$.
Then, every hyperplane that form a $d$-simplex with $H_1, \ldots, H_d$ of $d$-volume $V_0$ are all tangent to one of two fixed algebraic hypersurfaces of degree $d$ which have $H_1,\ldots,H_d$ as asymptotes and each has exactly $2^{d-1}$ connected components.
Moreover, the $2^d$ connected components of these two hypersurfaces are contained one each in the $2^d$ connected components determined by $H_1,\ldots,H_d$.
\end{prop}

\begin{proof}
Note that an (invertible) affine transformation preserves hyperplanes, hypersurfaces of degree $d$ and ratios of volumes.
Fix a coordinate $(x_1, \ldots, x_d)$ and 
we may assume
$
H_i = \{x_i=0\} \quad (i=1,\ldots,d).
$
Fix $c\neq 0$ and consider the hypersurface
$$
S_c:=\Bigl\{x\in\mathbb{R}^d:\ \prod_{i=1}^d x_i=c\Bigr\}.
$$
Take an arbitrary point $p=(p_1,\dots,p_d)\in S_c$. Then $\prod_{i=1}^d p_i=c$, and $p_i\neq 0$ for all $i$.
The tangent hyperplane of $S_c$ at $p$ is given by
$
\sum_{i=1}^d \frac{x_i}{p_i}=d.
$
This tangent hyperplane together with $H_1,\dots, H_d$ forms a $d$-simplex $\Delta_p$ with vertices
$$
O = (0,\dots,0),\quad (dp_1,0,\dots,0),\quad (0,dp_2,0,\dots,0),\ \dots,\ (0,\dots,0,dp_d).
$$
Hence the volume of $\Delta_p$ equals
$$
\frac{1}{d!}\prod_{i=1}^d |dp_i|
=\frac{d^d}{d!}\,\Bigl|\prod_{i=1}^d p_i\Bigr|
=\frac{d^d}{d!}\,|c|.
$$
Since the volume of $\Delta_p$ depends only on $d$ and $|c|$, for any point of $S_{-c}$,
the tangent hyperplane at that point together with $H_1,\ldots,H_d$ also cuts out a $d$-simplex of volume
$\frac{d^d}{d!}\,|c|$.
The hypersurfaces $S_c$ and $S_{-c}$ have $2^{d-1}$ connected components each, and every connected components has
$H_1,\ldots,H_d$ as asymptotes.
Moreover, each of the $2^d$ regions determined by $H_1,\ldots,H_d$ contains exactly one connected component
of $S_c\cup S_{-c}$.
Consequently, any hyperplane that, together with $H_1,\ldots,H_d$, forms a $d$-simplex of volume
$\frac{d^d}{d!}\,|c|$ is tangent to exactly one connected component of $S_c\cup S_{-c}$.
One easily sees that the $2^d$ connected components of $S_c\cup S_{-c}$ are contained one each in the $2^d$ connected components determined by $H_1,\ldots,H_d$.
\end{proof}

\begin{proof}[Proof of Theorem \ref{md}]

\textit{Upper bound}: 
Let $\mathcal{H}=\{H_1,\dots,H_n\}$ be an arrangement of hyperplanes in $\mathbb{R}^d$, and let $m_0$ be the minimum $d$-volume of a $d$-simplex determined by $\mathcal{H}$.
Assume that there exists a $d$-tuple $(H_{i_1},\dots,H_{i_d})$ of hyperplanes in $\mathcal{H}$ meeting at exactly one point.
Applying Proposition~\ref{tangent plane} to $(H_{i_1},\dots,H_{i_d})$ with $V_0=m_0$, any hyperplane $T$ such that $\{H_{i_1},\dots,H_{i_d},T\}$ forms a $d$-simplex of $d$-volume $m_0$ must be tangent to one of the two fixed algebraic hypersurfaces of degree $d$.
These two hypersurfaces have $2^d$ connected components in total, and each of the $2^d$ connected components determined by $H_{i_1},\dots,H_{i_d}$ contains exactly one of them.

Let $C_R$ be the unique connected component contained in a fixed connected component $R$ in $\{H_{i_1},\dots,H_{i_d}\}$ .
It suffices to show that at most one hyperplane in $\mathcal{H}\setminus\{H_{i_1},\dots,H_{i_d}\}$ is tangent to $C_R$.
For a contradiction, we assume that there exist two distinct hyperplanes $T,T' \subset \mathcal{H}\setminus\{H_{i_1},\dots,H_{i_d}\}$ tangent to $C_R$.
Let $\tau$ be the $d$-simplex formed by $H_{i_1},\dots,H_{i_d},T$, and let $\tau'$ be the $d$-simplex formed by $H_{i_1},\dots,H_{i_d},T'$.
Note that $\tau$ and $\tau'$ are $d$-simplices of $d$-volume $m_0$.

Now consider the arrangement 
$\mathcal{A}=\{H_{i_1},\dots,H_{i_d},T,T'\}$.
A well-known result of Shannon~\cite{shannon1979simplicial} states that an arrangement of $N (\geq d+1)$ hyperplanes in general position in $\mathbb{R}^d$ has $N-d$ simplicial $d$-cells, and hence $\mathcal{A}$ has two simplicial $d$-cells.
Observe that these two simplicial $d$-cells have $T$ as a facet, and they do not lie on the same side of $T$.
Therefore, at least one of them is strictly contained in $\tau$ or $\tau'$.
This contradicts the minimality of $m_0$.
Hence, we have $m_d (n) \leq 2^d \binom{n}{d} = O_d (n^d)$.

\medskip
\noindent \textit{Lower bound}: 
Denote by $S_d$ the set of all permutations of $\{ 1, \ldots, d \}$ and let $\pi \in S_d$ be a permutation.
The Coxeter–Freudenthal–Kuhn (CFK) triangulation of the unit hypercube $[0,1]^d$ is a triangulation of $[0,1]^d$ into $d$-simplices $\sigma_{\pi} = [v_0 (\pi), \ldots, v_d (\pi)]$, whose vertices are given by
\[
v_0 (\pi) = (0, \ldots, 0), \quad v_i (\pi) = v_{i-1} (\pi) + e_{\pi (i)} \quad \text{for } i = 1, \ldots, d,
\]
where $e_1, \ldots, e_d$ is a standard basis of $\mathbb{R}^d$.
Note that the points $v_0 (\pi), \ldots, v_d (\pi) \in \mathbb{R}^d$ are vertices of $[0,1]^d$ and the CFK triangulation decomposes $[0,1]^d$ into $d!$ $d$-simplices with $d$-volume $1/d!$.
It is easy to observe that this decomposition is realized by $\binom{d}{2}$ hyperplanes
$
\{x_p - x_q = 0 : 1 \leq p < q \leq d \}.
$
See Figure \ref{CFK} and refer to \cite{coxeter1934discrete}, \cite{freudenthal1942simplizialzerlegungen}, \cite{kuhn1968simplicial} for further details

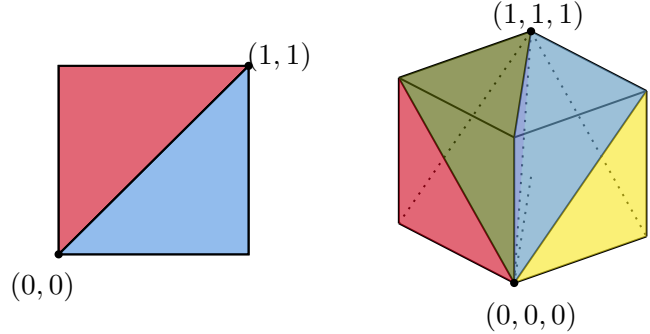
\begin{figure}[htbp]
\centering
\tikzset{every picture/.style={line width=0.75pt}} 

\begin{tikzpicture}[x=0.75pt,y=0.75pt,yscale=-0.6,xscale=0.6]

\draw    (371,130) -- (371,252) ;
\draw    (467,180) -- (467,302) ;
\draw    (371,130) -- (467,180) ;
\draw    (371,252) -- (467,302) ;
\draw    (481,91) -- (371,130) ;
\draw    (577,141) -- (577,263) ;
\draw    (481,91) -- (577,141) ;
\draw    (577,141) -- (467,180) ;
\draw    (577,263) -- (467,302) ;
\draw  [dash pattern={on 0.84pt off 2.51pt}]  (481,91) -- (577,263) ;
\draw    (577,141) -- (467,302) ;
\draw    (467,302) -- (371,130) ;
\draw  [dash pattern={on 0.84pt off 2.51pt}]  (481,91) -- (371,252) ;
\draw    (481,91) -- (467,180) ;
\draw  [dash pattern={on 0.84pt off 2.51pt}]  (481,213) -- (467,302) ;
\draw  [dash pattern={on 0.84pt off 2.51pt}]  (481,91) -- (467,302) ;
\draw  [draw opacity=0][fill={rgb, 255:red, 74; green, 144; blue, 226 }  ,fill opacity=0.6 ] (481,91) -- (577,141) -- (467,302) -- (467,180) -- cycle ;
\draw  [draw opacity=0][fill={rgb, 255:red, 248; green, 231; blue, 28 }  ,fill opacity=0.6 ] (577,263) -- (467,302) -- (577,141) -- (577,141) -- cycle ;
\draw  [draw opacity=0][fill={rgb, 255:red, 248; green, 231; blue, 28 }  ,fill opacity=0.1 ] (577,141) -- (467,302) -- (481,91) -- cycle ;
\draw  [draw opacity=0][fill={rgb, 255:red, 65; green, 117; blue, 5 }  ,fill opacity=0.6 ] (371,130) -- (481,91) -- (467,180) -- (467,302) -- cycle ;
\draw  [draw opacity=0][fill={rgb, 255:red, 208; green, 2; blue, 27 }  ,fill opacity=0.6 ] (371,130) -- (371,252) -- (467,302) -- cycle ;
\draw  [draw opacity=0][fill={rgb, 255:red, 208; green, 2; blue, 27 }  ,fill opacity=0.1 ] (481,91) -- (371,130) -- (467,302) -- cycle ;
\draw   (88,120) -- (246,120) -- (246,278) -- (88,278) -- cycle ;
\draw    (88,278) -- (246,120) ;
\draw  [fill={rgb, 255:red, 208; green, 2; blue, 27 }  ,fill opacity=0.6 ] (246,120) -- (88,278) -- (88,120) -- cycle ;
\draw  [fill={rgb, 255:red, 74; green, 144; blue, 226 }  ,fill opacity=0.6 ] (246,278) -- (246,120) -- (88,278) -- cycle ;
\draw  [fill={rgb, 255:red, 0; green, 0; blue, 0 }  ,fill opacity=1 ] (85.4,278.02) .. controls (85.39,276.59) and (86.54,275.41) .. (87.98,275.4) .. controls (89.41,275.39) and (90.59,276.54) .. (90.6,277.98) .. controls (90.61,279.41) and (89.46,280.59) .. (88.02,280.6) .. controls (86.59,280.61) and (85.41,279.46) .. (85.4,278.02) -- cycle ;
\draw  [fill={rgb, 255:red, 0; green, 0; blue, 0 }  ,fill opacity=1 ] (243.4,120.02) .. controls (243.39,118.59) and (244.54,117.41) .. (245.98,117.4) .. controls (247.41,117.39) and (248.59,118.54) .. (248.6,119.98) .. controls (248.61,121.41) and (247.46,122.59) .. (246.02,122.6) .. controls (244.59,122.61) and (243.41,121.46) .. (243.4,120.02) -- cycle ;
\draw  [fill={rgb, 255:red, 0; green, 0; blue, 0 }  ,fill opacity=1 ] (464.4,302.02) .. controls (464.39,300.59) and (465.54,299.41) .. (466.98,299.4) .. controls (468.41,299.39) and (469.59,300.54) .. (469.6,301.98) .. controls (469.61,303.41) and (468.46,304.59) .. (467.02,304.6) .. controls (465.59,304.61) and (464.41,303.46) .. (464.4,302.02) -- cycle ;
\draw  [fill={rgb, 255:red, 0; green, 0; blue, 0 }  ,fill opacity=1 ] (478.4,91.02) .. controls (478.39,89.59) and (479.54,88.41) .. (480.98,88.4) .. controls (482.41,88.39) and (483.59,89.54) .. (483.6,90.98) .. controls (483.61,92.41) and (482.46,93.59) .. (481.02,93.6) .. controls (479.59,93.61) and (478.41,92.46) .. (478.4,91.02) -- cycle ;

\draw (446,63.4) node [anchor=north west][inner sep=0.75pt]    {$( 1,1,1)$};
\draw (440,315.4) node [anchor=north west][inner sep=0.75pt]    {$( 0,0,0)$};
\draw (45,290.4) node [anchor=north west][inner sep=0.75pt]    {$( 0,0)$};
\draw (242,97.4) node [anchor=north west][inner sep=0.75pt]    {$( 1,1)$};

\end{tikzpicture}
\caption{The CFK triangulation of $[0,1]^2$ and $[0,1]^3$.}
\label{CFK}
\end{figure}

Now, we consider $d(n+1)$ hyperplanes given by
$
x_i = k \quad (i \in \{1, \ldots, d\}, \,\, k \in \{0, \ldots, n\}),
$
which form an arrangement that decomposes the hypercube $[0,n]^d$ into $n^d$ unit hypercubes. Furthermore, by adding $\binom{d}{2} \cdot (2n-1)$ hyperplanes
\[
\{x_p - x_q = t : 1 \leq p < q \leq d, \, t \in \mathbb{Z}, \, |t| \leq n-1 \},
\]
We can see that each unit hypercube admits the CFK triangulation (this is called K1 triangulation~\cite{REIA2025104237}).
The size of the resulting hyperplane arrangement is
$$
d(n+1) + \binom{d}{2} (2n-1) = d^2 n + \frac{d(3-d)}{2},
$$
and the number of (minimum volume) $d$-simplices is
$
d! \cdot n^d,
$
which gives us a lower bound $\Omega_d (n^d)$.

\end{proof}

\section{Maximum volume tetrahedra} \label{section4}

In this section, we show a lower bound for $M_d (n)$. A linear lower bound $M_d(n) \geq n-d$ can be easily obtained as follows: 
Fix coordinates $(x_1,\dots,x_d)$ in $\mathbb{R}^d$ and let
$H_i:=\{x_i=0\}$, $i=1,\dots,d$ and consider the branch
\[
\sigma:=\{x\in\mathbb{R}^d:\ \prod_{i=1}^d x_i =1,\ x_1,\ldots, x_d > 0\}.
\]
Choose $n-d$ distinct hyperplanes $T_1,\dots,T_{n-d}$ tangent to $\sigma$, with their tangent points contained in a sufficiently small neighborhood of some fixed point $u \in \sigma$.
For each $j$, the hyperplanes $H_1,\dots,H_d,T_j$ bound a $d$-simplex in the orthant $\{ (x_1, \ldots, x_d) : x_1, \ldots, x_d \geq 0\}$, and its $d$-volume equals $d^d/d!$.
Since the points of tangency of $T_1,\dots,T_{n-d}$ are chosen sufficiently close to each other, any bounded $d$-simplex of the arrangement that uses at least two of the hyperplanes $T_j$ has $d$-volume smaller than $d^d/d!$.
Hence the simplices bounded by $H_1,\dots,H_d$ and one $T_j$ are of maximum $d$-volume in the arrangement, and therefore $M_d(n)\ge n-d$.
\subsection{Explicit lower bound construction for $M_3 (n)$}

As mentioned in the introduction, Damasdi et al.~\cite{damasdi2020triangle} obtained the lower bound $M_2(n) > \frac{6}{5}n - O(1)$.
The idea of their lower bound construction is the following: They begin with a family of five lines forming a regular pentagon, and then recursively place arrangements together. At each step, two arrangements are combined by applying a suitable affine transformation so that the areas of their maximum area triangles coincide, and placing them in such a way that the maximum area does not increase.  Moreover, by sliding one of the whole families of lines, one can create two additional maximum area triangles. Consequently, we obtain a construction in which the number of maximum area triangles increases by seven whenever a family of five lines is added.
We see that this idea can also work in three dimensions.

\begin{prop} \label{prop:M_3}
$M_3 (n) > \frac{7}{6} n - O(1).$
\end{prop}
The following can be proved using the same procedure as in their proof. Details are left to the reader.



\begin{prop} \label{prop:4.2}
If there are no two parallel planes in the arrangement, then we can find a rectangular box $ABCDEFGH$ such that adding any plane to the arrangement that intersects the segments $AB$, $CD$, $EF$ and $GH$ creates no new maximum volume tetrahedra.
\end{prop}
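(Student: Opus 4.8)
The plan is to mimic the planar construction of Damásdi et al.\ and reduce the three--dimensional statement to a local computation near the four ``edge'' segments. First I would set up coordinates so that the rectangular box $ABCDEFGH$ is axis-aligned, say $[0,a]\times[0,b]\times[0,c]$, with $AB$, $CD$, $EF$, $GH$ chosen to be four edges of the box that are pairwise ``spread out'' — in the planar case one picks two opposite sides of a rectangle; here the natural analogue is to pick four edges, no two of which are coplanar faces, so that any plane meeting all four of them must pass through the interior of the box in a controlled way. The key structural fact to establish is: a plane $\Pi$ that meets the interiors of $AB$, $CD$, $EF$, $GH$ cuts the box into two pieces, and any tetrahedron $\tau(H_1,H_2,H_3,\Pi)$ built from $\Pi$ together with three planes of the arrangement has volume strictly smaller than the current maximum $V_{\max}$. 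The reason is that such a $\Pi$ is ``trapped'' inside the box, so the portion of any candidate tetrahedron on one side of $\Pi$ is confined to a region of bounded volume, while replacing $\Pi$ by a supporting plane of the box in the same ``slot'' can only enlarge the tetrahedron — this is the exact 3D analogue of the fact that in the plane, a line crossing two fixed opposite sides of a rectangle cannot be a side of a maximum-area triangle unless it is pushed to the boundary.

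The steps, in order, are: (1) choose the box and the four segments explicitly in terms of the given arrangement, using the hypothesis that no two planes are parallel — this is what guarantees that every plane of the arrangement has a well-defined, non-degenerate intersection pattern with the box and lets us shrink the box so that the four chosen edges are each met by a positive-length open segment; (2) prove the trapping lemma: if $\Pi$ meets all four open segments, then for any three planes $H_1,H_2,H_3$ of the arrangement, $\mathrm{vol}\,\tau(H_1,H_2,H_3,\Pi) < V_{\max}$. I would argue this by a perturbation/monotonicity argument: slide $\Pi$ parallel to itself (or rotate it keeping it inside the ``slot'' defined by the four segments) toward the face of the box it is heading for; the volume of $\tau(H_1,H_2,H_3,\Pi)$ is a ratio-of-affine-functions in the parameters and is monotone along this slide, so the supremum over admissible $\Pi$ is attained in the limit where $\Pi$ degenerates onto a face of the box, and in that limiting configuration one checks directly that the volume is at most $V_{\max}$ (indeed it coincides with, or is dominated by, a tetrahedron already present). (3) Conclude that adding any such $\Pi$ to the arrangement creates no new maximum-volume tetrahedron: a new maximum tetrahedron would have to use $\Pi$ as one of its four faces (since the old planes already have maximum $V_{\max}$), and step (2) forbids this.

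The main obstacle I anticipate is step (2), the trapping lemma, and specifically making the monotonicity argument rigorous in three dimensions. In the plane the analogous statement is essentially a one-parameter convexity fact; in $\mathbb{R}^3$ the family of planes meeting four fixed segments is genuinely two-dimensional, so ``sliding $\Pi$ to the boundary'' requires choosing the right one-parameter deformation and checking that the volume functional is monotone (or at least that its maximum over the two-dimensional admissible region lies on the relevant part of the boundary). I expect one can reduce this to a sequence of one-parameter moves — first fix the trace of $\Pi$ on one pair of segments and vary the other, then vary the first — at each stage using that the volume of a tetrahedron is a multilinear/ratio function of the coefficients of $\Pi$ and hence has no interior maximum on a segment. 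Handling the case analysis for which face of the box $\Pi$ is pushed onto, and verifying that the limiting tetrahedron does not exceed $V_{\max}$ for every choice of the three supporting planes $H_1,H_2,H_3$, is where the bulk of the careful (but routine) work will lie; the choice of the four segments in step (1) must be made precisely so that this verification goes through uniformly.
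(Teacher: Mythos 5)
First, a caveat: the paper states this proposition \emph{without proof} -- it is simply asserted as the three-dimensional extension of Proposition 4.1 of Dam\'asdi et al. -- so there is no argument in the text to compare yours against, and your proposal has to stand on its own. As written, it has a genuine gap at step (2). The ``trapping lemma'' is false in the form you rely on: confining the added plane $\Pi$ to pass through a small box does \emph{not} bound the volumes of the tetrahedra $\tau(H_1,H_2,H_3,\Pi)$. Such a tetrahedron is the cone from the apex $H_1\cap H_2\cap H_3$ over the triangle that the three planes cut out on $\Pi$, and neither the apex nor that triangle need lie anywhere near the box; already in the plane, a line threaded through an arbitrarily small slot forms arbitrarily large triangles with pairs of lines whose intersection point is far from the slot, or to one of whose members it is nearly parallel. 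So ``the portion of any candidate tetrahedron on one side of $\Pi$ is confined to a region of bounded volume'' is simply not true, and the desired conclusion (volume $<V_{\max}$ for every triple) cannot come from confinement. The sliding argument fails at its endpoint for a related reason: the limiting plane is a face of the auxiliary box, which is not a plane of the arrangement, so there is no ``tetrahedron already present'' against which to compare the limiting volume.

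What is actually needed, and what your outline never supplies, is the existence of a single plane $\Pi_0$ in general position with respect to the arrangement such that \emph{every} tetrahedron $\tau(H_i,H_j,H_k,\Pi_0)$ has volume strictly less than $V_{\max}$ (strictly less, not merely different: a new tetrahedron of volume exceeding $V_{\max}$ would itself be a new maximum-volume tetrahedron). Given such a $\Pi_0$, the box is the easy part: center four short parallel segments at four affinely spanning points of $\Pi_0$; any plane meeting all four segments is uniformly close to $\Pi_0$ once the segments are short enough, and by continuity of the finitely many volume functions (none degenerate at $\Pi_0$) all of its tetrahedra still have volume below $V_{\max}$. Avoiding volumes \emph{equal} to $V_{\max}$ is cheap -- for each triple the locus of such planes is a two-dimensional tangency variety in the three-dimensional space of planes, by Proposition \ref{tangent plane} -- but avoiding volumes \emph{greater} than $V_{\max}$ means avoiding finitely many open regions simultaneously, and establishing that their complement is nonempty (e.g., by perturbing a plane of an existing maximum-volume tetrahedron in a direction that strictly decreases every volume currently equal to $V_{\max}$) is the real content of the proposition. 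Your proposal replaces exactly this step with the invalid trapping heuristic; I would restructure the argument to construct $\Pi_0$ first and derive the box from it by compactness, rather than fixing the box and trying to control all planes through it directly.
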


For a plane arrangement $\mathcal{A}$, let $T(\mathcal{A})$ denote the number of maximum volume tetrahedra in $\mathcal{A}$.

\begin{figure}[htbp]
  \centering
  \begin{minipage}{0.48\textwidth}
    \centering
    \resizebox{\linewidth}{!}{%
\begin{tikzpicture}[x=0.75pt,y=0.75pt,yscale=-1,xscale=0.8]

\draw    (137.41,73.55) -- (322.5,75.54) ;
\draw    (271.44,131.76) -- (271.44,313) ;
\draw    (271.44,131.76) -- (332.53,86.78) ;
\draw    (345,76) -- (493,76.2) ;
\draw    (245.91,131.76) -- (271.44,131.76) ;
\draw    (350.21,52) -- (371,52) ;
\draw    (245.91,131.76) -- (245.91,313) ;
\draw    (245.91,313) -- (271.44,313) ;
\draw    (245.91,131.76) -- (306.09,87.44) ;
\draw    (371,52) -- (371,75) ;
\draw  [dash pattern={on 0.84pt off 2.51pt}]  (371,75) -- (371.74,234.95) ;
\draw    (332.53,86.78) -- (478.41,86.78) ;
\draw    (478.41,86.78) -- (493,76.2) ;
\draw    (332.53,86.78) -- (333.44,268.68) ;
\draw    (478.41,86.78) -- (478.41,268.02) ;
\draw    (493,76.2) -- (493,257.44) ;
\draw  [dash pattern={on 0.84pt off 2.51pt}]  (350.21,52) -- (351.68,235.61) ;
\draw    (478.41,268.02) -- (493,257.44) ;
\draw    (333.44,268.68) -- (478.41,268.02) ;
\draw  [dash pattern={on 0.84pt off 2.51pt}]  (137.41,256.11) -- (493,257.44) ;
\draw    (271.44,313) -- (333.44,268.68) ;
\draw    (121,85.46) -- (306.09,87.44) ;
\draw  [dash pattern={on 0.84pt off 2.51pt}]  (137.41,73.55) -- (137.41,256.11) ;
\draw  [dash pattern={on 0.84pt off 2.51pt}]  (122.82,266.7) -- (137.41,256.11) ;
\draw    (122.82,266.7) -- (245.91,268.02) ;
\draw    (345,76) -- (371,52) ;
\draw    (322.5,75.54) -- (350.21,52) ;
\draw    (121,85.46) -- (122.82,266.7) ;
\draw    (121,85.46) -- (137.41,73.55) ;
\draw  [dash pattern={on 0.84pt off 2.51pt}]  (245.91,268.02) -- (333.44,268.68) ;
\draw  [dash pattern={on 0.84pt off 2.51pt}]  (333.44,268.68) -- (371.74,234.95) ;
\draw  [dash pattern={on 0.84pt off 2.51pt}]  (245.91,313) -- (351.68,235.61) ;
\draw  [dash pattern={on 0.84pt off 2.51pt}]  (351.68,235.61) -- (371.74,234.95) ;

\draw (375,37.4) node [anchor=north west][inner sep=0.75pt]    {$A_{1}$};
\draw (318,36.4) node [anchor=north west][inner sep=0.75pt]    {$B_{1}$};
\draw (275,129.4) node [anchor=north west][inner sep=0.75pt]    {$D_{1}$};
\draw (219,127.4) node [anchor=north west][inner sep=0.75pt]    {$C_{1}$};
\draw (266,321.4) node [anchor=north west][inner sep=0.75pt]    {$H_{1}$};
\draw (220,319.4) node [anchor=north west][inner sep=0.75pt]    {$G_{1}$};
\draw (374,226.4) node [anchor=north west][inner sep=0.75pt]    {$E_{1}$};
\draw (308,221.4) node [anchor=north west][inner sep=0.75pt]    {$F_{1}$};
\draw (453,94.4) node [anchor=north west][inner sep=0.75pt]    {$A_{2}$};
\draw (495,55.4) node [anchor=north west][inner sep=0.75pt]    {$B_{2}$};
\draw (127,47.4) node [anchor=north west][inner sep=0.75pt]    {$C_{2}$};
\draw (91,81.4) node [anchor=north west][inner sep=0.75pt]    {$D_{2}$};
\draw (459,272.4) node [anchor=north west][inner sep=0.75pt]    {$E_{2}$};
\draw (496,240.4) node [anchor=north west][inner sep=0.75pt]    {$F_{2}$};
\draw (145,232.4) node [anchor=north west][inner sep=0.75pt]    {$G_{2}$};
\draw (95,267.4) node [anchor=north west][inner sep=0.75pt]    {$H_{2}$};

\end{tikzpicture}
    }
    \caption{Rectangular boxes}
    \label{Rectangular boxes}
  \end{minipage}\hfill
  \begin{minipage}{0.48\textwidth}
    \centering
    \resizebox{\linewidth}{!}{%
      \begin{tikzpicture}[x=0.75pt,y=0.75pt,yscale=-0.6, xscale=0.6]

\draw    (411,9) -- (329.22,129.49) ;
\draw    (411,9) -- (418.78,134.51) ;
\draw    (329.22,129.49) -- (357.77,173.42) ;
\draw    (357.77,173.42) -- (418.78,134.51) ;
\draw    (411,9) -- (357.77,173.42) ;
\draw    (202,119.45) -- (357.77,173.42) ;
\draw    (202,119.45) -- (329.22,129.49) ;
\draw    (202,119.45) -- (282.48,202.29) ;
\draw    (282.48,202.29) -- (357.77,173.42) ;
\draw    (282.48,202.29) -- (213.68,303.96) ;
\draw    (418.78,134.51) -- (546,144.56) ;
\draw    (357.77,173.42) -- (546,144.56) ;
\draw    (357.77,173.42) -- (430.47,314) ;
\draw    (357.77,173.42) -- (420.08,198.53) ;
\draw    (420.08,198.53) -- (430.47,314) ;
\draw    (546,144.56) -- (420.08,198.53) ;
\draw    (357.77,173.42) -- (346.09,241.2) ;
\draw    (357.77,173.42) -- (213.68,303.96) ;
\draw    (346.09,241.2) -- (220.17,302.7) ;
\draw    (430.47,314) -- (346.09,241.2) ;

      \end{tikzpicture}
    }
    \caption{A star-shaped badge}
    \label{A star-shaped badge}
  \end{minipage}
\end{figure}

\begin{prop}[$3$-dimensional analogue of Proposition 4.2 in \cite{damasdi2020triangle}]
If $\mathcal{H}_1$ and $\mathcal{H}_2$ are arrangements of planes neither of which contains two parallel planes, then there exist affine transformations $\varphi, \psi$ such that 
$$
T (\varphi (\mathcal{H}_1) \cup \psi (\mathcal{H}_2)) \geq T(\mathcal{H}_1) + T(\mathcal{H}_2) + 2.
$$
\end{prop}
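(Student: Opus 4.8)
The plan is to adapt the two-dimensional gluing argument of Dam\'{a}sdi et al.\ \cite{damasdi2020triangle}, exploiting the extra affine freedom available in $\mathbb{R}^3$ to manufacture two new maximum tetrahedra rather than one. First I would normalise: since an invertible affine map multiplies all $3$-volumes by a fixed constant, after composing $\varphi$ and $\psi$ with suitable dilations I may assume that $\varphi(\mathcal{H}_1)$ and $\psi(\mathcal{H}_2)$ have the same maximum tetrahedron volume, say $V$. Then I would invoke the preceding (box) proposition to attach to each $\mathcal{H}_i$ a rectangular box $B_i$ with its distinguished quadruple of parallel edges, so that any plane crossing all four of those edges neither creates a tetrahedron of volume exceeding $V$ with the planes of $\mathcal{H}_i$ nor alters the set of volume-$V$ tetrahedra of $\mathcal{H}_i$.

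Next I would choose the linear parts of $\varphi$ and $\psi$ to be strong compressions along two transverse directions, making the planes of $\varphi(\mathcal{H}_1)$ nearly parallel to a plane $\Pi_1$ and those of $\psi(\mathcal{H}_2)$ nearly parallel to a distinct plane $\Pi_2$, and then translate the two images far apart along the line $\Pi_1 \cap \Pi_2$, placing $\varphi(B_1)$ and $\psi(B_2)$ roughly as in Figure~\ref{Rectangular boxes}. The aim of this placement is twofold: (i) every plane of $\psi(\mathcal{H}_2)$ should meet $\varphi(B_1)$ transversally through its four distinguished edges, and symmetrically, so that by the box proposition no plane of one arrangement enlarges or destroys a maximum tetrahedron of the other, whence the $T(\mathcal{H}_1)+T(\mathcal{H}_2)$ ``internal'' maximum tetrahedra all survive with volume $V$; and (ii) any tetrahedron spanned by planes coming from \emph{both} arrangements should be long and thin, hence of volume far below $V$, with the sole exception of a controlled family of ``bridging'' tetrahedra stretching between features of $\varphi(B_1)$ and $\psi(B_2)$.

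The gain of $2$ would then come from the bridging tetrahedra. The box furnished by the previous proposition carries two independent ``long'' directions, so after fixing one bridging tetrahedron to volume exactly $V$ using one scaling parameter, the remaining translation/scaling freedom can be used to pin a \emph{second}, combinatorially distinct bridging tetrahedron to volume $V$ as well, while keeping every other bridging tetrahedron strictly below $V$; this is precisely the point where the three-dimensional construction beats the planar one. Both bridging tetrahedra use at least one plane from each piece, so they are new and mutually distinct, and combining this with the previous paragraph shows that the volume-$V$ tetrahedra of $\varphi(\mathcal{H}_1)\cup\psi(\mathcal{H}_2)$ number at least $T(\mathcal{H}_1)+T(\mathcal{H}_2)+2$, with $V$ still the global maximum. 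Taking all choices generic keeps the union in general position with no two planes parallel, so the estimate can be iterated (this is what feeds the $\tfrac{7}{6}n-O(1)$ bound from a suitable base arrangement).

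I expect the main obstacle to be the simultaneous control demanded by step (ii) together with the bridging construction: one must select the two compression directions, the compression ratios, and the separation $D$ so that at once \emph{no} mixed tetrahedron exceeds $V$, exactly two of them attain $V$, and the intrinsic maxima of the two pieces remain untouched. Concretely this reduces to estimating the volume of a tetrahedron cut out by two ``nearly-$\Pi_1$'' planes and two ``nearly-$\Pi_2$'' planes as a function of $D$ and the compression ratios, and — a second delicate point — checking that the guard-box conclusion, which is phrased for the insertion of a single plane, persists when the whole thin, locally near-parallel bundle $\psi(\mathcal{H}_2)$ is inserted near $\varphi(B_1)$; this last step should follow by shrinking the boxes until that bundle behaves like one transverse slab.
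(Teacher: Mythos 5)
Your overall strategy matches the paper's: normalise so that the two maximum volumes agree, invoke the preceding box proposition to get a guard box in each arrangement, and position the two images so that the boxes cross as in Figure~\ref{Rectangular boxes}, whence each plane of one arrangement threads the distinguished edges of the other's box and the internal maximum tetrahedra survive. Up to that point you are reconstructing the intended argument.

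The genuine gap is in how you produce the \emph{second} new maximum-volume tetrahedron. You argue by a parameter count: the box has two ``long'' directions, so two degrees of freedom should let you pin two bridging tetrahedra to volume $V$ simultaneously while keeping all others below $V$. But nothing in your sketch explains why the system of two volume equations is solvable, nor why adjusting the second parameter does not perturb the volume of the first bridging tetrahedron back off $V$ (generically it will). The paper's mechanism is concrete and avoids this: first translate $\psi(\mathcal{H}_2)$ until a first mixed tetrahedron $S$ attains volume $V$ (a first-contact/continuity argument), then look at how the four planes of $S$ split between the two arrangements. If exactly one plane $H$ of $S$ lies in $\varphi(\mathcal{H}_1)$, translate all of $\psi(\mathcal{H}_2)$ parallel to $H$; if exactly two lie in $\varphi(\mathcal{H}_1)$, translate $\psi(\mathcal{H}_2)$ along the line of intersection of those two planes; the $3{+}1$ case is symmetric. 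Each of these motions rigidly preserves $S$ (it amounts to translating the whole tetrahedron), so one can continue until a second first-contact occurs, yielding the second new maximum tetrahedron without losing the first. This case analysis on the $1{+}3$, $2{+}2$, $3{+}1$ splits, together with the associated volume-preserving translation, is the missing idea in your proposal; the dimension count alone does not substitute for it. (Separately, you are right to flag that the guard-box statement controls only tetrahedra using a single inserted plane, so the $2{+}2$ mixed tetrahedra are not covered by it; this difficulty is present in the paper's proof as well and is not resolved there either.)
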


\begin{proof}
In an arrangement $\mathcal{H}_1$, there exists a rectangular box satisfying Proposition~\ref{prop:4.2} with vertices $A_1, B_1, C_1, D_1, E_1, F_1, G_1, H_1$, and similarly in $\mathcal{H}_2$, there exists one with vertices $A_2, B_2, C_2, D_2, E_2, F_2, G_2, H_2$. By applying appropriate affine transformations to each, we can arrange $\mathcal{H}_1$ and $\mathcal{H}_2$ so that the volumes of their respective maximum volume tetrahedra are equal and the two rectangular boxes cross each other as shown in Figure \ref{Rectangular boxes}. Let $\varphi$ denote the affine transformation applied to $\mathcal{H}_1$ and $\psi$ the affine transformation applied to $\mathcal{H}_2$. 

As the next step, we translate the planes of $\psi(\mathcal{H}_2)$ to create a new maximum-volume tetrahedron $S$. If exactly one of the four planes defining $S$ comes from a plane of $\varphi(\mathcal{H}_1)$, let this plane be $H$. By moving the entire $\psi(\mathcal{H}_2)$ along $H$, we can create another new maximum-volume tetrahedron. Similarly, if exactly three planes come from planes of $\varphi(\mathcal{H}_1)$, we can move $\varphi(\mathcal{H}_1)$ along one plane of $\psi(\mathcal{H}_2)$. Finally, if exactly two planes come from planes of $\varphi(\mathcal{H}_1)$, by moving $\psi(\mathcal{H}_2)$ along the line of intersection of those two planes, we can create a new maximum-volume tetrahedron.
\end{proof}

We observe that the arrangement of planes $\mathcal{S}\subset \mathbb{R}^3$ forming a star-shaped badge as shown in Figure~\ref{A star-shaped badge}
consists of six planes and forming five maximum volume tetrahedra.
Starting from $\mathcal{S}$, we repeatedly add suitably transformed copies of $\mathcal{S}$ so that the above proposition applies at each step.
Consequently, for every $k\ge 1$ we obtain an arrangement of $6k$ planes in $\mathbb{R}^3$ that determines at least $7k-2$ maximum volume tetrahedra,
which proves Proposition~\ref{prop:M_3}.

\subsection{General lower bound for $M_d (n)$}
We improve here the lower bound achieved by Proposition~\ref{prop:M_3} and also provides lower bounds for all $d \geq 3$.

\Md*

The Theorem follows immediately from the following observation.
\begin{obs}
For all $d \geq 3$, we have 
$$
M_d (n) \geq M_{d-1} (n-1).
$$
\end{obs}

\begin{proof}
Fix a hyperplane $H\subset \mathbb{R}^d$.  Choose $(d-2)$-flats (affine subspaces of dimension $d-2$)
$
A_1,\ldots,A_{n-1}
$
inside $H \ (\cong \mathbb{R}^{d-1})$ so that the number of $(d-1)$-simplices of maximum $(d-1)$-volume
formed by $A_1,\ldots,A_{n-1}$ equals $M_{d-1}(n-1)$.
Fix a point $p\in \mathbb{R}^d\setminus H$, and for each $i\in\{1,\ldots,n-1\}$
let $H_i$ be the hyperplane containing $p$ and $A_i$.
Set $H_n:=H$.  This yields an arrangement of $n$ hyperplanes in $\mathbb{R}^d$,
$
\mathcal{H}:=\{H_1,\ldots,H_n\}
$.
Then every $d$-simplex $\Delta$ in $\mathcal{H}$ have exactly one facet contained
in $H$ (i.e., $\Delta\cap H$ is a $(d-1)$-simplex).  Conversely, every $(d-1)$-simplex $F$ in the arrangement $\{A_1,\ldots,A_{n-1}\}\subset H$ arises as the
facet $F=\Delta\cap H$ of a unique bounded $d$-simplex $\Delta$ in $\mathcal{H}$.
Moreover, whenever $\Delta$ corresponds to $F=\Delta\cap H$, the simplex $\Delta$ is a
cone with base $F$ and apex $p$.
Hence, its $d$-volume is a product of the $(d-1)$-volume of $F$ and $\operatorname{dist}(p,H)/d$.
\end{proof}

We are not aware of any upper bound for $M_d(n)$ that holds for all $d \geq 3$. Moreover, we do not have any reason to believe that the correct upper bound is also close to linear. Whether $M_d (n)$ is linear in $n$ is open even in the case $d=2$. Note that Dam\'{a}sdi et al.~\cite{damasdi2020triangle} conjectured that $M_2(n) = O(n^{1+\varepsilon})$ for every $\varepsilon>0$.

\section{Hyperplanes defining distinct volume $d$-simplices} \label{section5}

In this section, we show that for all $d \geq 2$, $D_d (n)$ grows strictly slower than $n$.
\Dd*

Let $[n] = \{1,\ldots, n \}$ and denote by $r_k (n)$ the size of the largest $S \subseteq [n]$ such that $S$ contains no $k$ elements in arithmetic progression. The following is the key proposition.

\begin{prop} \label{Prop13}
$D_d (n) < r_{d+2} (n)$ for all $d \geq 2$.
\end{prop}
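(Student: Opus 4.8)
The plan is to prove the contrapositive in the form of a reduction: given an arrangement $\mathcal{A}$ of $n$ hyperplanes in general position whose induced $d$-simplices all have distinct volumes, we want to extract a subset of $[n]$ with no $(d+2)$-term arithmetic progression, of size $|\mathcal{A}|$. Equivalently, we will show that if $S \subseteq [n]$ \emph{does} contain a $(d+2)$-AP, then \emph{any} labeling of the hyperplanes of a well-chosen arrangement by the elements of $S$ forces two $d$-simplices of equal volume, so that a "distinct-volume" subset can never be as large as an AP-free set would permit. The natural construction is to place the $n$ hyperplanes so that they are in \emph{convex position} with respect to a fixed direction — concretely, take $n$ hyperplanes tangent to a fixed strictly convex hypersurface (say a paraboloid $x_d = x_1^2 + \cdots + x_{d-1}^2$), parametrized by a real parameter $t \in \{t_1 < t_2 < \cdots < t_n\}$, and identify the index set with $[n]$. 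The key feature I would exploit is that for such one-parameter families the volume of the $d$-simplex cut out by hyperplanes with parameters $t_{i_0} < t_{i_1} < \cdots < t_{i_d}$ is governed by a function that is, up to an affine reparametrization of the $t$'s, symmetric or "translation-friendly" enough that an arithmetic progression among the parameters produces a volume coincidence.

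The heart of the argument is the following claim, which I would isolate as a lemma: there is a choice of the $t_i$ (for instance $t_i = i$, i.e. equally spaced) such that the volume of the $d$-simplex determined by the $d+1$ hyperplanes with parameters $a_0 < a_1 < \cdots < a_d$ depends only on the consecutive gaps, and moreover is invariant under reversing the order of the gaps, i.e. $\mathrm{vol}(a_0,\dots,a_d) = \mathrm{vol}(b_0,\dots,b_d)$ whenever the multiset of gap-sequences $(a_1-a_0,\dots,a_d-a_{d-1})$ and $(b_1-b_0,\dots,b_d-b_{d-1})$ are reverses of one another. Granting this, if $S$ contains a $(d+2)$-AP $\{c, c+\delta, c+2\delta, \dots, c+(d+1)\delta\}$, pick the two $(d+1)$-subsets obtained by deleting the second element and by deleting the second-to-last element: their gap sequences are $(2\delta, \delta, \dots, \delta)$ and $(\delta, \dots, \delta, 2\delta)$, which are reverses of each other, so the two corresponding $d$-simplices have equal volume — contradicting the distinct-volume hypothesis. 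Hence any distinct-volume subarrangement corresponds to an AP-free subset of $[n]$, giving $D_d(n) < r_{d+2}(n)$ (strict, since the extremal arrangement itself need not be the tangent-line one, but any arrangement of size $D_d(n)$ embeds its index set this way — one must be slightly careful here and I would phrase the construction so that it is the arrangement, not an arbitrary one, that is built from $[n]$; the inequality then reads: the largest distinct-volume subset of \emph{this} arrangement has size $< r_{d+2}(n)$, and one checks this arrangement is extremal or at least that the bound transfers).

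The step I expect to be the main obstacle is establishing the gap-reversal symmetry of the simplex volume in the lemma. For $d=2$ (lines tangent to a parabola) this is a short computation — the triangle area is a product of pairwise parameter differences, so it is already symmetric — but for general $d$ the volume of the simplex cut by $d+1$ tangent hyperplanes of the paraboloid is a more complicated rational function of the parameters, and it is not a priori obvious that it factors through the gap sequence, let alone symmetrically. I would attack this by writing the tangent hyperplane at parameter $t$ explicitly, computing the $d+1$ vertices of the simplex as intersections of $d$-subsets of these hyperplanes (each vertex is a rational function of $d$ parameters), and then expanding the volume determinant; I expect massive cancellation leaving something like a Vandermonde-type product $\prod_{j<k}(a_k - a_j)$ divided by a symmetric denominator, which would be manifestly gap-reversal symmetric after noting that reversing $(a_0,\dots,a_d) \mapsto (C-a_d, \dots, C-a_0)$ for a suitable constant $C$ preserves all pairwise differences up to sign. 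If the clean product formula fails to hold for the paraboloid, the fallback is to choose a different convex hypersurface (e.g. the "moment curve" cone $\prod x_i = 1$ from Proposition \ref{tangent plane}, whose tangent-hyperplane simplices we already understand well) for which the volume genuinely is a symmetric function of the tangency parameters, trading a harder geometric setup for an easier algebraic identity.
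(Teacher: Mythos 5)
Your overall reduction is the right one and matches the paper's: build one explicit arrangement $\{H_1,\dots,H_n\}$ in which any $(d+2)$-term arithmetic progression among the indices forces two induced $d$-simplices of equal volume, so that every distinct-volume subfamily has AP-free index set and hence $D_d(n)\le r_{d+2}(n)$. For $d=2$ your tangent-lines-to-a-parabola instance even works: the area of the triangle cut out by the tangents at $a,b,c$ is $\tfrac14|(a-b)(b-c)(c-a)|$, which depends only on pairwise differences, and the two triples obtained from a $4$-AP by deleting an endpoint (or your two ``interior'' deletions) give equal products.

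The genuine gap is the key lemma for $d\ge 3$, which you yourself flag as the main obstacle and never establish. Two problems: (i) the construction is not well-defined as stated --- the tangent hyperplanes of a paraboloid in $\mathbb{R}^d$ form a $(d-1)$-parameter family, so you must first specify a one-parameter subfamily (e.g.\ osculating hyperplanes of a curve), and then the hoped-for Vandermonde-type product formula, its dependence only on the gap sequence, and its reversal symmetry are all unproven conjectures; the primal moment-curve identity does not transfer to the dual computation. (ii) You also demand more symmetry than necessary: if the volume depends only on the gap sequence (i.e.\ is invariant under an index shift), then deleting the \emph{first} versus the \emph{last} term of the $(d+2)$-AP already gives two simplices with identical gap sequences, so the gap-reversal symmetry you struggle with is not needed at all. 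The paper resolves exactly this point by a different, computation-free device: it builds $\mathcal{H}$ so that there is a volume-preserving affine map $T_D$ with $T_D(H_i)=H_{i+D}$ --- a block rotation $R_{D\theta_1}\oplus\cdots\oplus R_{D\theta_{d/2}}$ on $\mathbb{R}^d=V_1\oplus\cdots\oplus V_{d/2}$ for even $d$ (hyperplanes spanned by consecutive points on circles of incommensurable angular steps), and a screw motion for odd $d$. Then the simplex on $\{H_{i_1},\dots,H_{i_{d+1}}\}$ is carried by $T_L$ onto the simplex on $\{H_{i_2},\dots,H_{i_{d+2}}\}$, and equality of volumes is immediate since $\det J_{T_D}=1$. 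Until you either prove your volume formula for a concrete one-parameter family of hyperplanes or replace it by such a symmetry argument, the proof is incomplete for all $d\ge 3$.
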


Determining the value of $r_k(n)$ is one of the fundamental problems in arithmetic combinatorics, and both upper and lower bounds have been progressively improved. 
In 2009, Green and Tao~\cite{green2006new, green2017new} proved that there exists a constant $c > 0$ such that
$r_4(n) \ll n (\log n)^{-c}$.
For $k \geq 4$, Leng et al.~\cite{leng2024improved} recently obtained an upper bound $r_k(n) << n e^{-(\log \log n)^{c_k}}$ for some $c_k \in (0,1)$.
See \cite{szemeredi1975sets, gowers1998new, gowers2000arithmetic} for further details.
Theorem \ref{Dd} can be immediately derived from these bounds and Proposition \ref{Prop13}. 
Therefore, any improvement of upper bound for $r_k (n)$ automatically improves that for $D_d (n)$.

\begin{proof}[Proof of Proposition \ref{Prop13}]
We first consider the case $d=2$.
Define $\mathcal{L} = \{ l_1, \dots, l_n \}$ as the set of lines corresponding to $n$ consecutive edges of a regular $N$-gon $(N > n)$, arranged in clockwise order.
We choose $N$ sufficiently large so that any four lines in $\mathcal{L}$ form two triangles.
By the definition of $r_k(n)$, every subset of $\mathcal{L}$ of size $r_4(n) + 1$ always contains four lines $\{ l_{i_1}, l_{i_2}, l_{i_3}, l_{i_4} \}$ such that $\{ i_1, i_2, i_3, i_4 \} \subseteq [n]$ is a $4$-terms arithmetic progression.
One can easily see that these four lines then form two congruent triangles.

We generalize this construction to higher dimensions. First, assume that $d$ is even.
Write $\mathbb{R}^d$ as the orthogonal sum of two-dimensional subspaces:
$$
\mathbb{R}^d = V_1 \oplus \cdots \oplus V_{d/2}.
$$
For each $j$, let $C_j$ denote the unit circle centered at the origin in $V_j$.
We choose angle parameters $\theta_1, \ldots, \theta_{d/2} > 0$ so that for each $j$, one has
$
n\theta_j \ll \pi/2,
\, \text{and} \,\, \theta_j \neq \theta_{j'}
\ \text{for } j \neq j'.
$
For each $i \in \{0, \ldots, n\}$ and each $j \in \{1, \ldots, d/2\}$, we define
$
P_{i,j} = (\cos(i\theta_j), \sin(i\theta_j)) \in V_j
$
(viewed in the standard coordinates of $\mathbb{R}^d$, the point $P_{i,j}$ has nonzero coordinates only in its $V_j$-component, which is $(\cos(i\theta_j), \sin(i\theta_j))$, while all other components are zero).
For each $i \in \{1, \ldots, n\}$, let $H_i$ denote the affine hull of
$$
\bigcup_{1 \le j \le d/2} \{ P_{i-1,j}, P_{i,j} \}.
$$
This has dimension $d-1$.
Let $\mathcal{H} = \{ H_1, \ldots, H_n \}$ denote the resulting hyperplane arrangement.
One can verify that $\mathcal{H}$ is in general position for almost all choices of the angle parameters $\theta_1, \ldots, \theta_{d/2}$.
For any positive integer $D$, we define the map
$T_D : \mathbb{R}^d \to \mathbb{R}^d$ as 
$$
T_D|_{V_j} = R_{D\theta_j} \, (j = 1, \ldots, d/2),
$$
where $R_{D\theta_j}$ is the rotation by angle $D\theta_j$ on $V_j$.
One easily sees that 
$
T_D(P_{i,j}) = P_{i+D,j}
$ (for $i+D \leq n$),
and therefore
$
T_D(H_i) = H_{i+D}.
$
Since the Jacobian matrix of $T_D$ is 
$$
J_{T_D} = 
\begin{pmatrix}
R_{D\theta_1} & 0 & \cdots & 0\\
0 & R_{D\theta_2} & \ddots & \vdots\\[2pt]
\vdots & \ddots & \ddots & 0\\[2pt]
0 & \cdots & 0 & R_{D\theta_{d/2}}
\end{pmatrix},
$$

$
\det J_{T_D}=\prod_{j=1}^{d/2} \det R_{D\theta_j} = 1.
$
Thus, the map $T_d$ is an affine volume-preserving map.

Now let
$
i_1 < i_2 < \cdots < i_{d+2}
$
be a $(d+2)$-arithmetic progression in $[n]$ with common difference $L$.
As the arrangement $\mathcal{H}$ is in general position, the hyperplanes 
$\{ H_{i_1}, \ldots, H_{i_{d+1}} \}$ 
form a unique $d$-simplex $S$, and the hyperplanes
$\{ H_{i_2}, \ldots, H_{i_{d+2}} \}$ 
form a unique $d$-simplex $S'$.
Since $T_L (H_{i_k}) = H_{i_{k+1}}$, we have
$
T_L (S) = S'
$
which means that $S$ and $S'$ have the same $d$-volume.
Therefore, by the definition of $r_k (n)$, every subfamily of $\mathcal{H}$ of size (at least) $r_{d+2}(n)+1$ always contain two $d$-simplices with the same $d$-volume.

Now, we assume $d$ is odd, we decompose $\mathbb{R}^d$ into an orthogonal direct sum of $(d-1)/2$ 2-dimensional vector spaces and one 1-dimensional vector space:
$$
\mathbb{R}^d = V_1 \oplus \cdots \oplus V_{(d-1)/2} \oplus L.
$$
First, we place $n$ equally spaced points $Q_i = (0, \ldots, 0, i)$ ($i \in \{1, \ldots, n\}$) on $L$.
Choose angular parameters $\theta_1, \ldots, \theta_{(d-1)/2} > 0$ such that $\theta_j \neq \theta_{j'}$ for $j \neq j'$.
For each $j \in \{1, \ldots, (d-1)/2\}$, consider the helical curve wrapping around $L$ in the $3$-dimensional vector space $V_j \oplus L$:
$$
\gamma_{\theta_j}(t) = (0,\ldots, 0,\underbrace{\cos (t \theta_j)}_{2j-1}, \underbrace{\sin (t \theta_j)}_{2j}, 0, \ldots, 0, t)
$$
and we place $(n+1)$ equally spaced points on $\gamma_{\theta_j}(t)$:
$$
P_{i, j} = (0,\ldots, 0,\underbrace{\cos (i \theta_j)}_{2j-1}, \underbrace{\sin (i \theta_j)}_{2j},0,\ldots 0, i) ,\quad i \in \{0, \ldots, n\}.
$$
Define $H_i$ as the affine hull of
$$
\bigcup_{1 \leq j \leq (d-1)/2} \{ P_{i-1, j}, P_{i,j} \} \cup \{Q_i\},
$$
which has dimension $d-1$.
By taking $\theta_1, \ldots, \theta_{(d-1)/2}$ appropriately,
we obtain the hyperplane arrangement $\mathcal{H} = \{H_1, \ldots, H_n\}$ which is in general position.
For any positive integer $D$, one can define an affine volume-preserving map $T_D :\mathbb{R}^d \rightarrow \mathbb{R}^d$ by 
$$
T_D|_{V_j} = R_{D\theta_j}, \, T_D|_L : m \mapsto m+D.
$$
Observe that
$
T_D(P_{i,j}) = P_{i+D,j},
\,
T_D((0, \ldots, 0, i)) = (0, \ldots, 0, i+D)
$ (for $i+D \leq n$),
hence
$
T_D(H_i) = H_{i+D}.
$
The same argument as in the previous case shows that any subfamily of $\mathcal{H}$ with size $r_{d+2} (n) + 1$ contains two $d$-simplices with the same $d$-volume.

\end{proof}

\begin{rem}
As previouly mentioned, Dam\'{a}sdi et al. \cite{damasdi2020triangle} obtained the lower bound with some additional assumptions: if there are no six lines that are tangent to a common conic  (observe that five lines always have a 
common tangent conic), then an arrangement of $n$ lines always contains a subset of size $\Omega (n^{1/5})$ whose induced triangles have all distinct $d$-volumes.
Their argument can be applied to general dimensions, and the following can be shown:
If there are no $\binom{2d}{d}$ hyperplanes that are tangent to a common hypersurface, then 
then an arrangement of $n$ hyperplanes always contains a subset of size $\Omega_d (n^{\frac{1}{2d+1}})$ whose induced $d$-simplices have all distinct volumes.
However, We are not aware of any lower bounds for $D_d(n)$ for $d \geq 2$.
\end{rem}

\medskip
\noindent {\bf Acknowledgement.} I would like to thank Pavel Valtr for helpful suggestions.
The work was supported by grant no. 23-04949X of the Czech Science Foundation (GAČR) and the Charles University Grant Agency (GAUK) project number 378426.

\typeout{}
\bibliography{Simplex_volumes_in_hyperplane_arrangements_revised }
\bibliographystyle{plainurl}

\appendix 
\section*{Appendix}
\begin{proof}[Proof of Proposition \ref{Kobon}]
It suffices to prove that
$$
K_d(n)\le \frac{2n}{d+1}K_{d-1}(n-1)
$$
holds for every $d \geq 3$.
Let $\mathcal{H}$ be an arrangement of $n$ hyperplanes in $\mathbb{R}^d$. Fix a hyperplane $H \in \mathcal{H}$. The restriction of $\mathcal{H}$ to $H$ induces an arrangement in $H \simeq \mathbb{R}^{d-1}$, namely
$$
\mathcal H_H:=\{H\cap H' : H'\in \mathcal H,\ H'\neq H \}.
$$

Let $F \subset H$ be a facet of a simplicial $d$-cell of $\mathcal{H}$. Since the interior of $F$ in $H$ is not intersected by any hyperplane of $\mathcal H\setminus\{H\}$,  $F$ is the closure of a simplicial $(d-1)$-cell of $\mathcal{H}_H$.
Each simplicial $(d-1)$-cell of $\mathcal{H}_H$ is incident to at most two simplicial $d$-cells of $\mathcal{H}$. 
Hence, the number of simplicial $d$-cells of $\mathcal{H}$ having a facet contained in $H$ is at most
$$
2K_{d-1}(n-1).
$$

we now take the sum over all $H \in \mathcal{H}$. Since each $d$-simplex in $\mathcal{H}$ has exactly $d+1$ faces, and each of these faces lies in one hyperplane of $\mathcal{H}$, we have,
$$
(d+1)K_d(n)\le 2nK_{d-1}(n-1).
$$
\end{proof}
\end{document}